\date{}
\begin{document}

\centerline{}

\centerline {\Large{\bf  Fusion frame and its alternative dual in tensor product }}

\centerline{\Large{\bf  of Hilbert spaces}}

\newcommand{\mvec}[1]{\mbox{\bfseries\itshape #1}}
\centerline{}
\centerline{\textbf{Prasenjit Ghosh}}
\centerline{Department of Pure Mathematics, University of Calcutta,}
\centerline{35, Ballygunge Circular Road, Kolkata, 700019, West Bengal, India}
\centerline{e-mail: prasenjitpuremath@gmail.com}
\centerline{}
\centerline{\textbf{T. K. Samanta}}
\centerline{Department of Mathematics, Uluberia College,}
\centerline{Uluberia, Howrah, 711315,  West Bengal, India}
\centerline{e-mail: mumpu$_{-}$tapas5@yahoo.co.in}

\newtheorem{Theorem}{\quad Theorem}[section]

\newtheorem{definition}[Theorem]{\quad Definition}

\newtheorem{theorem}[Theorem]{\quad Theorem}

\newtheorem{remark}[Theorem]{\quad Remark}

\newtheorem{corollary}[Theorem]{\quad Corollary}

\newtheorem{note}[Theorem]{\quad Note}

\newtheorem{lemma}[Theorem]{\quad Lemma}

\newtheorem{example}[Theorem]{\quad Example}

\newtheorem{result}[Theorem]{\quad Result}
\newtheorem{conclusion}[Theorem]{\quad Conclusion}

\newtheorem{proposition}[Theorem]{\quad Proposition}

\begin{abstract}
\textbf{\emph{We study fusion frame in tensor product of Hilbert spaces and discuss some of its properties.\,The resolution of the identity operator on a tensor product of Hilbert spaces is being discussed.\,An alternative dual of a fusion frame in tensor product of Hilbert spaces is being presented.}}
\end{abstract}
{\bf Keywords:}  \emph{Fusion frame, Resolution of identity operator, Canonical dual \\ \smallskip\hspace{2.2 cm}frame, Tensor product of Hilbert spaces, Tensor product of frames.}

{\bf 2010 Mathematics Subject Classification:} \emph{42C15; 46C07.}

\section{Introduction}

Fusion frame was introduced by P.\,Casazza and G.\,Kutyniok \cite{Kutyniok}.\,They define frames for closed subspaces of a given Hilbert spaces with respect to the orthogonal projections.\;Fusion frame is a natural generalization of the frame theory in Hilbert space and it has so many applications in data processing, coding theory, signal processing and many other fields.\,Infact, the fusion frame theory is more elegent due to complicated relations between the structure of the sequence of weighted subspace and the local frames in the subspace.\;In fusion frame, the atomic resolution of the identity operator on Hilbert space was studied by M.\,S Asgari and Amil Khosraki \cite{Asgari}.\,They present a reconstruction formula and establish some useful results about resolution of the identity operator.

The basic concepts of tensor product of Hilbert spaces were described by S.\,Rabin-son \cite{S}.\;Frames and Bases in Tensor Product of Hilbert spaces were introduced by A.\,Khosravi and M.\,S.\,Asgari \cite{A}.\;Reddy et al.\,\cite{Upender} also studied the frame in tensor product of Hilbert spaces and presented the frame operator on tensor product of Hilbert spaces.\;The concepts of fusion frames and \,$g$-frames in tensor product of Hilbert spaces were introduced by Amir Khosravi and M.\,Mirzaee Azandaryani \cite{Mir}.

In this paper, fusion frame in tensor product of Hilbert spaces is developed and discuss a relationship among fusion frames in Hilbert spaces and their tensor products.\,We shall verify that in tensor product of Hilbert spaces, an image of a fusion frame under a bounded linear operator will be a fusion frame if the operator is invertible and unitary.\;The resolution of the identity operator on a tensor product of Hilbert spaces is presented.\,In tensor product of Hilbert spaces, we study an alternative dual of a fusion frame and see that the canonical dual of a fusion frame is also an alternative dual.\,Finally, we establish that an alternative dual of a fusion frame is a fusion frame in tensor product of Hilbert spaces. 

Throughout this paper,\;$H \;\text{and}\; K$\, are considered to be separable Hilbert spaces with associated inner products \,$\left <\,\cdot \,,\, \cdot\,\right>_{1} \;\text{and}\; \left <\,\cdot \,,\, \cdot\,\right>_{2}$, respectively.\,$I_{H}\, \,\text{and}\, \,I_{K}$\, are the identity operators on \,$H\, \,\text{and}\, \,K$, respectively.\;$\mathcal{B}\,(\,H \,,\, K\,)$\; is the collection of all bounded linear operators from \,$H \;\text{to}\; K$.\;In particular, \,$\mathcal{B}\,(\,H\,)$\, denote the space of all bounded linear operators on \,$H$.\;$P_{\,V}$\, denote the orthogonal projection onto the closed subspace \,$V \,\subset\, H$.\;$\left\{\,V_{i}\,\right\}_{ i \,\in\, I}$\, and \,$\left\{\,W_{j}\,\right\}_{ j \,\in\, J}$\, are the collections of closed subspaces of \,$H \;\text{and}\; K$, where \,$I,\, J$\, are index sets.\;Define the space
\[l^{\,2}\left(\,\left\{\,V_{i}\,\right\}_{ i \,\in\, I}\,\right) \,=\, \left \{\,\{\,f_{\,i}\,\}_{i \,\in\, I} \,:\, f_{\,i} \,\in\, V_{i} \,,\, \sum\limits_{\,i \,\in\, I}\, \left \|\,f_{\,i}\,\right \|_{1}^{\,2} \,<\, \infty \,\right\}\]
with inner product is given by \,$\left<\,\{\,f_{\,i}\,\}_{ i \,\in\, I} \,,\, \{\,g_{\,i}\,\}_{ i \,\in\, I}\,\right> \,=\, \sum\limits_{\,i \,\in\, I}\, \left<\,f_{\,i} \,,\, g_{\,i}\,\right>_{1}$.\;Clearly \,$l^{\,2}\left(\,\left\{\,V_{i}\,\right\}_{ i \,\in\, I}\,\right)$\; is a Hilbert space with respect to the above inner product \cite{Asgari}.\;Similarly, we can define the space \,$l^{\,2}\left(\,\left\{\,W_{j}\,\right\}_{j \,\in\, J}\,\right)$.

\section{Preliminaries}

\smallskip\hspace{.6 cm}

\begin{theorem}\cite{Gavruta}\label{th0.001}
Let \,$V \,\subset\, H$\; be a closed subspace and \,$T \,\in\, \mathcal{B}\,(\,H\,)$.\;Then \,$P_{\,V}\, T^{\,\ast} \,=\, P_{\,V}\,T^{\,\ast}\, P_{\,\overline{T\,V}}$.\;If \,$T$\, is an unitary operator (\,i\,.\,e \,$T^{\,\ast}\, T \,=\, I_{H}$\,), then \,$P_{\,\overline{T\,V}}\;T \,=\, T\,P_{\,V}$.
\end{theorem}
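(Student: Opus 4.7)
The plan is to handle the two claims separately, both by exploiting orthogonality through the defining identity $\langle T^{\,\ast} y \,,\, v \rangle \,=\, \langle y \,,\, T\,v \rangle$.

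For the first identity \,$P_{\,V}\, T^{\,\ast} \,=\, P_{\,V}\, T^{\,\ast}\, P_{\,\overline{T\,V}}$, I would first show that \,$T^{\,\ast}$\, maps \,$(\,\overline{T\,V}\,)^{\,\perp}$\, into \,$V^{\,\perp}$.\; Indeed, if \,$y \,\in\, (\,\overline{T\,V}\,)^{\,\perp}$\, then for every \,$v \,\in\, V$\, we have \,$\langle\, T^{\,\ast}\,y \,,\, v \,\rangle_{1} \,=\, \langle\, y \,,\, T\,v \,\rangle_{1} \,=\, 0$\, because \,$T\,v \,\in\, \overline{T\,V}$; hence \,$T^{\,\ast}\,y \,\in\, V^{\,\perp}$\, and therefore \,$P_{\,V}\,T^{\,\ast}\,y \,=\, 0$.\; Now decompose any \,$h \,\in\, H$\, as \,$h \,=\, P_{\,\overline{T\,V}}\,h \,+\, (\,I_{H} \,-\, P_{\,\overline{T\,V}}\,)\,h$, apply \,$P_{\,V}\,T^{\,\ast}$, and use that the second summand is killed by \,$P_{\,V}\,T^{\,\ast}$\, by the above; this gives the required equality.

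For the second identity, assume \,$T^{\,\ast}\,T \,=\, I_{H}$.\; Then \,$T$\, is an isometry, so \,$T\,V$\, is already closed and \,$\overline{T\,V} \,=\, T\,V$.\; Given any \,$h \,\in\, H$, split \,$h \,=\, P_{\,V}\,h \,+\, (\,I_{H} \,-\, P_{\,V}\,)\,h$\, and apply \,$P_{\,\overline{T\,V}}\,T$\, to each piece.\; On the first piece, \,$T\,P_{\,V}\,h \,\in\, T\,V$, so \,$P_{\,\overline{T\,V}}\,T\,P_{\,V}\,h \,=\, T\,P_{\,V}\,h$.\; On the second piece, for any \,$w \,\in\, V$, \,$\langle\, T\,(\,I_{H} \,-\, P_{\,V}\,)\,h \,,\, T\,w \,\rangle_{1} \,=\, \langle\, T^{\,\ast}\,T\,(\,I_{H} \,-\, P_{\,V}\,)\,h \,,\, w \,\rangle_{1} \,=\, \langle\, (\,I_{H} \,-\, P_{\,V}\,)\,h \,,\, w \,\rangle_{1} \,=\, 0$, so \,$T\,(\,I_{H} \,-\, P_{\,V}\,)\,h \,\in\, (\,\overline{T\,V}\,)^{\,\perp}$\, and is annihilated by \,$P_{\,\overline{T\,V}}$.\; Adding the two contributions yields \,$P_{\,\overline{T\,V}}\,T\,h \,=\, T\,P_{\,V}\,h$.

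There is no real obstacle; both steps reduce to one-line computations with the adjoint relation.\; The only mildly delicate point is remembering that in the unitary case the extra identity \,$T^{\,\ast}\,T \,=\, I_{H}$\, is needed precisely to guarantee that \,$T$\, preserves orthogonality (so that the \,$V^{\,\perp}$-piece stays orthogonal to \,$T\,V$\, after applying \,$T$\,), whereas the first identity requires no hypothesis on \,$T$\, beyond boundedness.
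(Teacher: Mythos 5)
Your proof is correct. The paper itself gives no proof of this statement --- it is quoted as a preliminary result with the citation \cite{Gavruta} --- and your argument is the standard one from that reference: for the first identity you show $T^{\,\ast}$ maps $(\,\overline{T\,V}\,)^{\,\perp}$ into $V^{\,\perp}$ via the adjoint relation and then split $h$ along $\overline{T\,V} \,\oplus\, (\,\overline{T\,V}\,)^{\,\perp}$, and for the second you use $T^{\,\ast}\,T \,=\, I_{H}$ to show $T\,(\,I_{H} \,-\, P_{\,V}\,)\,h \,\perp\, T\,V$ and split $h$ along $V \,\oplus\, V^{\,\perp}$. One harmless redundancy: you do not need the observation that $T\,V$ is closed when $T$ is an isometry, since $(\,T\,V\,)^{\,\perp} \,=\, (\,\overline{T\,V}\,)^{\,\perp}$ already makes orthogonality to $T\,V$ suffice; and note that the hypothesis $T^{\,\ast}\,T \,=\, I_{H}$ (which the paper, following its own parenthetical convention, calls unitary) is exactly the isometry condition your computation uses, so your proof matches the stated hypotheses precisely.
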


\begin{theorem}\cite{Jain}\label{th0.01}
The set \,$\mathcal{S}\,(\,H\,)$\; of all self-adjoint operators on \,$H$\; is a partially ordered set with respect to the partial order \,$\leq$\, which is defined as for 
\[T,\,S \,\in\, \mathcal{S}\,(\,H\,), \,T \,\leq\, S \,\Leftrightarrow\, \left<\,T\,(\,f\,) \,,\, f\,\right>_{1} \,\leq\, \left<\,S\,(\,f\,) \,,\, f\,\right>_{1}\; \;\forall\; f \,\in\, H.\] 
\end{theorem}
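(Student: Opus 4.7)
The plan is to verify the three axioms of a partial order directly: reflexivity, antisymmetry, and transitivity, using only the fact that for a self-adjoint operator $A \in \mathcal{S}(H)$ the quadratic form $\langle A f, f\rangle_1$ is real-valued.

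First, I would observe that for every $T \in \mathcal{S}(H)$ and every $f \in H$, the scalar $\langle T f, f\rangle_1$ is real, since $\overline{\langle Tf,f\rangle_1} = \langle f, Tf\rangle_1 = \langle T^{\ast}f,f\rangle_1 = \langle Tf,f\rangle_1$. Hence the comparison $\langle Tf,f\rangle_1 \leq \langle Sf,f\rangle_1$ is a well-posed statement between two real numbers. Reflexivity ($T \leq T$) then follows immediately from $\langle Tf,f\rangle_1 \leq \langle Tf,f\rangle_1$, and transitivity ($T \leq S$ and $S \leq R$ imply $T \leq R$) is inherited pointwise from the transitivity of $\leq$ on $\mathbb{R}$.

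The only substantive step is antisymmetry: if $T \leq S$ and $S \leq T$, then $\langle (S-T)f, f\rangle_1 = 0$ for every $f \in H$. Setting $A := S - T$, I note that $A$ is self-adjoint as a difference of self-adjoint operators. The key lemma to invoke is that a self-adjoint operator whose associated quadratic form vanishes identically must itself be zero. I would prove this via the polarization identity for self-adjoint operators:
\[
4\,\langle A f, g\rangle_1 \,=\, \langle A(f+g), f+g\rangle_1 \,-\, \langle A(f-g), f-g\rangle_1,
\]
which, under the self-adjointness of $A$, expresses $\langle Af,g\rangle_1$ as a real combination of quadratic forms (for complex scalars one needs the standard four-term polarization with $\pm i$). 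From $\langle A h, h\rangle_1 = 0$ for all $h \in H$, I conclude $\langle Af, g\rangle_1 = 0$ for all $f,g$, and hence $A = 0$, i.e., $S = T$.

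The main (and really only) obstacle is ensuring the antisymmetry argument is valid over $\mathbb{C}$; the real polarization identity above suffices when $A$ is self-adjoint, but I would state it carefully so that the reader sees why self-adjointness is essential — without it, the vanishing of $\langle Af, f\rangle_1$ need not force $A = 0$ (as skew-adjoint examples on $\mathbb{C}^2$ show). Once this point is addressed, the theorem is established.
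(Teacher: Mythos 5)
Your verification of the three axioms is correct in substance, and note first that there is nothing in the paper to compare it against: the paper states this theorem as a quoted background result from \cite{Jain} and gives no proof at all, so yours is simply the standard textbook argument. Two points of precision are worth fixing. First, your displayed identity
\[
4\,\left<\,A\,f \,,\, g\,\right>_{1} \,=\, \left<\,A\,(\,f+g\,) \,,\, f+g\,\right>_{1} \,-\, \left<\,A\,(\,f-g\,) \,,\, f-g\,\right>_{1}
\]
is literally valid only when the form of \,$A$\, is real-valued in both arguments (e.g.\ on a real Hilbert space): over \,$\mathbb{C}$, expanding the right-hand side for self-adjoint \,$A$\, yields \,$2\left<\,A\,f,\,g\,\right>_{1} + 2\,\overline{\left<\,A\,f,\,g\,\right>_{1}} \,=\, 4\,\mathrm{Re}\left<\,A\,f,\,g\,\right>_{1}$, and the imaginary part is then recovered by replacing \,$g$\, with \,$i\,g$. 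Your parenthetical about the four-term polarization gestures at exactly this, so it is a matter of statement rather than a gap, but the identity should be written with \,$\mathrm{Re}$\, on the left in the complex case.

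Second, your closing caveat is backwards. On \,$\mathbb{C}^{2}$ — indeed on any complex Hilbert space — the vanishing of \,$\left<\,A\,f \,,\, f\,\right>_{1}$\, for all \,$f$\, forces \,$A \,=\, 0$\, with \emph{no} self-adjointness hypothesis at all; that is what full complex polarization gives, and a skew-adjoint \,$A \,=\, i\,B$\, with \,$B$\, self-adjoint has quadratic form \,$i\left<\,B\,f\,,\,f\,\right>_{1}$, which vanishes identically only if \,$B \,=\, 0$. The genuine counterexamples live on \emph{real} spaces, e.g.\ rotation by \,$\pi/2$\, on \,$\mathbb{R}^{2}$. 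So the correct division of labor is: over \,$\mathbb{C}$, self-adjointness is needed only so that \,$\left<\,T\,f,\,f\,\right>_{1}$\, is real and the order is well-posed (your opening observation), while antisymmetry would hold for arbitrary bounded operators; over \,$\mathbb{R}$, self-adjointness is what rescues antisymmetry. With that remark corrected, your proof is complete and is the expected one.
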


\begin{definition}\cite{Kutyniok}
Let \,$\left\{\,v_{i}\,\right\}_{ i \,\in\, I}$\, be a collection of positive weights.\;A family of weighted closed subspaces \,$V \,=\, \left\{\, (\,V_{i},\, v_{i}\,) \,:\, i \,\in\, I\,\right\}$\, is called a fusion frame for \,$H$\, if there exist constants \,$0 \,<\, A \,\leq\, B \,<\, \infty$\, such that
\begin{equation}\label{eq0.01}
A \;\left\|\,f \,\right\|_{1}^{\,2} \,\leq\, \sum\limits_{\,i \,\in\, I}\, v_{i}^{\,2} \;\left\|\, P_{\,V_{i}}\,(\,f\,) \,\right\|_{1}^{\,2} \,\leq\, B \, \left\|\, f \, \right\|_{1}^{\,2}\; \;\forall\; f \,\in\, H.
\end{equation}
The constants \,$A,\, B$\; are called fusion frame bounds.\;If the family \,$V$\, satisfies 
\[\sum\limits_{\,i \,\in\, I}\, v_{i}^{\,2} \;\left\|\, P_{\,V_{i}}\,(\,f\,) \,\right\|_{1}^{\,2} \,\leq\, B \, \left\|\, f \, \right\|_{1}^{\,2}\; \;\forall\; f \,\in\, H\] then it is called a fusion Bessel sequence in \,$H$\, with bound \,$B$.
\end{definition}

\begin{definition}\cite{Kutyniok}\label{def1}
Let \,$V \,=\, \left\{\,\left(\,V_{i},\, v_{i}\,\right)\,\right\}_{i \,\in\, I}$\, be a fusion Bessel sequence in \,$H$\, with bound \,$B$.\;The synthesis operator \,$T_{V} \,:\, l^{\,2}\left(\,\left\{\,V_{i}\,\right\}_{ i \,\in\, I}\,\right) \,\to\, H$\, is defined as 
\[T_{V}\,\left(\,\left\{\,f_{\,i}\,\right\}_{i \,\in\, I}\,\right) \,=\,  \sum\limits_{\,i \,\in\, I}\, v_{i}\,f_{i}\; \;\;\forall\; \{\,f_{i}\,\}_{i \,\in\, I} \,\in\, l^{\,2}\left(\,\left\{\,V_{i}\,\right\}_{ i \,\in\, I}\,\right)\] and the analysis operator is given by 
\[ T_{V}^{\,\ast} \,:\, H \,\to\, l^{\,2}\left(\,\left\{\,V_{i}\,\right\}_{ i \,\in\, I}\,\right),\; T_{V}^{\,\ast}\,(\,f\,) \,=\,  \left\{\,v_{i}\, P_{\,V_{i}}\,(\,f\,)\,\right\}_{ i \,\in\, I}\; \;\forall\; f \,\in\, H.\]
The fusion frame operator \,$S_{V} \,:\, H \,\to\, H$\; is defined as follows:
\[S_{V}\,(\,f\,) \,=\, T_{V}\,T_{V}^{\,\ast}\,(\,f\,) \,=\, \sum\limits_{\,i \,\in\, I}\, v_{i}^{\,2}\; P_{\,V_{i}}\,(\,f\,)\; \;\forall\; f \,\in\, H.\]
\end{definition}

\begin{note}\cite{Kutyniok}
Let \,$V$\, be a fusion frame with bounds \,$A,\,B$.\;Then from (\ref{eq0.01}), 
\[\left<\,A\,f \,,\, f\,\right>_{1} \,\leq\, \left<\,S_{V}\,(\,f\,) \,,\, f\,\right>_{1} \,\leq\, \left<\,B\,f \,,\, f\,\right>_{1}\; \;\forall\; f \,\in\, H.\]
The operator \,$S_{V}$\, is bounded, self-adjoint, positive and invertible.\;Now, according to the Theorem (\ref{th0.01}), we can write, \,$A\,I_{H} \,\leq\,S_{V} \,\leq\, B\,I_{H}$\, and this gives \,$B^{\,-1}\,I_{H} \,\leq\, S_{V}^{\,-1} \,\leq\, A^{\,-1}\,I_{H}$.
\end{note}

\begin{definition}\cite{Kutyniok}
Let \,$V \,=\, \left\{\,\left(\,V_{i},\, v_{i}\,\right)\,\right\}_{i \,\in\, I}$\, be a fusion frame for \,$H$.\,Then \,$\left\{\,\left(\,S_{V}^{\,-1}\,V_{i},\, v_{i}\,\right)\,\right\}_{i \,\in\, I}$\, is called the canonical dual fusion frame of \,$V$. 
\end{definition}

\begin{theorem}\cite{Gavruta}\label{thm1.1}
Let \,$V \,=\, \left\{\,\left(\,V_{i},\, v_{i}\,\right)\,\right\}_{i \,\in\, I}$\, be a fusion frame for \,$H$\, with bounds \,$A,\, B$\, and \,$S_{V}$\, be the corresponding frame operator.\;Then the canonical dual fusion frame of \,$V$\, is a fusion frame with bounds \,$\dfrac{A}{\left\|\,S_{V}\,\right\|^{\,2}\,\left\|\,S^{\,-\, 1}_{V}\,\right\|^{\,2}}\,,\;B\,\left\|\,S_{V}\,\right\|^{\,2}\,\left\|\,S^{\,-\, 1}_{V}\,\right\|^{\,2}$. 
\end{theorem}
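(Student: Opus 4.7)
The plan is to verify the two fusion frame inequalities directly for the family \,$\left\{\,(\,S_{V}^{\,-1}\,V_{i} \,,\, v_{i}\,)\,\right\}_{i \,\in\, I}$.\;Since \,$S_{V}$\, is bounded, self-adjoint and invertible, so is \,$S_{V}^{\,-1}$, and each \,$S_{V}^{\,-1}\,V_{i}$\, is a closed subspace of \,$H$\, (as \,$S_{V}^{\,-1}$\, is a linear homeomorphism).\;The engine of the proof is Theorem \ref{th0.001}, which I would apply twice: once with \,$T \,=\, S_{V}^{\,-1}$\, to obtain the lower bound, and once with \,$T \,=\, S_{V}$\, (applied to the subspace \,$S_{V}^{\,-1}\,V_{i}$\,) to obtain the upper bound.

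For the upper bound, applying Theorem \ref{th0.001} with \,$T \,=\, S_{V}$\, to the closed subspace \,$S_{V}^{\,-1}\,V_{i}$\, yields
\[P_{\,S_{V}^{\,-1}\,V_{i}}\,S_{V} \,=\, P_{\,S_{V}^{\,-1}\,V_{i}}\,S_{V}\,P_{\,V_{i}}.\]
Given \,$f \,\in\, H$, set \,$g \,=\, S_{V}^{\,-1}\,f$, so \,$f \,=\, S_{V}\,g$.\;Then \,$\left\|\,P_{\,S_{V}^{\,-1}\,V_{i}}\,f\,\right\|_{1} \,\leq\, \left\|\,S_{V}\,\right\|\;\left\|\,P_{\,V_{i}}\,g\,\right\|_{1}$; squaring, weighting by \,$v_{i}^{\,2}$, summing, and invoking the upper fusion frame bound for \,$V$\, together with \,$\|\,g\,\|_{1} \,\leq\, \|\,S_{V}^{\,-1}\,\|\,\|\,f\,\|_{1}$\, gives \,$\sum_{i}\, v_{i}^{\,2}\,\left\|\,P_{\,S_{V}^{\,-1}\,V_{i}}\,f\,\right\|_{1}^{\,2} \,\leq\, B\,\|\,S_{V}\,\|^{\,2}\,\|\,S_{V}^{\,-1}\,\|^{\,2}\,\|\,f\,\|_{1}^{\,2}$.

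For the lower bound, apply Theorem \ref{th0.001} with \,$T \,=\, S_{V}^{\,-1}$\, to the subspace \,$V_{i}$\, to get \,$P_{\,V_{i}}\,S_{V}^{\,-1} \,=\, P_{\,V_{i}}\,S_{V}^{\,-1}\,P_{\,S_{V}^{\,-1}\,V_{i}}$.\;This immediately yields \,$\left\|\,P_{\,V_{i}}\,S_{V}^{\,-1}\,f\,\right\|_{1} \,\leq\, \|\,S_{V}^{\,-1}\,\|\,\left\|\,P_{\,S_{V}^{\,-1}\,V_{i}}\,f\,\right\|_{1}$.\;Squaring, weighting, summing, and using the lower fusion frame bound for \,$V$\, applied to \,$S_{V}^{\,-1}\,f$, together with the elementary bound \,$\|\,f\,\|_{1} \,\leq\, \|\,S_{V}\,\|\,\|\,S_{V}^{\,-1}\,f\,\|_{1}$, produces
\[\sum\limits_{i \,\in\, I} v_{i}^{\,2}\,\left\|\,P_{\,S_{V}^{\,-1}\,V_{i}}\,f\,\right\|_{1}^{\,2} \,\geq\, \dfrac{A}{\|\,S_{V}\,\|^{\,2}\,\|\,S_{V}^{\,-1}\,\|^{\,2}}\,\|\,f\,\|_{1}^{\,2}.\]

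The routine parts are the norm manipulations; the only genuinely nonobvious choice is to use \,$T \,=\, S_{V}$\, (rather than \,$T \,=\, S_{V}^{\,-1}$\,) in the upper-bound step, which is what funnels the projection factor \,$P_{\,V_{i}}$\, into the correct position so that the original fusion frame bound \,$B$\, can be invoked.\;Once that choice is made, the two estimates are symmetric in spirit and the claimed bounds follow.
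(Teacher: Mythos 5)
Your proof is correct: both invocations of Theorem \ref{th0.001} are legitimate --- with \,$T \,=\, S_{V}$\, applied to the closed subspace \,$S_{V}^{\,-1}\,V_{i}$\, (so that \,$T^{\,\ast} \,=\, S_{V}$\, by self-adjointness and \,$\overline{T\,(\,S_{V}^{\,-1}\,V_{i}\,)} \,=\, V_{i}$\,), and with \,$T \,=\, S_{V}^{\,-1}$\, applied to \,$V_{i}$\, (where \,$S_{V}^{\,-1}\,V_{i}$\, is closed since \,$S_{V}^{\,-1}$\, is a homeomorphism) --- and the norm estimates then deliver exactly the stated bounds. The paper itself states this theorem without proof, citing Gavruta, and your argument is precisely the standard one from that source, so there is no divergence to report.
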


\begin{note}\cite{Gavruta}\label{note1}
A reconstruction formula on \,$H$\, with the help of canonical dual fusion frame is given by
\[f \,=\, \sum\limits_{\,i \,\in\, I}\,v^{\,2}_{\,i}\,P_{\,S^{\,-\, 1}_{V}\,V_{\,i}}\,S^{\,-\, 1}_{V}\,P_{\,V_{\,i}}\,(\,f\,)\;\; \;\forall\; f \,\in\, H.\]
\end{note}

\begin{definition}\cite{Gavruta}
Let \,$V \,=\, \left\{\,\left(\,V_{i},\, v_{i}\,\right)\,\right\}_{i \,\in\, I}$\, be a fusion frame for \,$H$\, and \,$S_{V}$\, be the corresponding frame operator.\;Then a fusion Bessel sequence \,$V^{\,\prime} \,=\, \left\{\,\left(\,V^{\,\prime}_{i},\, v^{\,\prime}_{i}\,\right)\,\right\}_{i \,\in\, I}$\, is said to be an alternative dual of \,$V$\, if
\[f \,=\, \sum\limits_{\,i \,\in\, I}\,v_{\,i}\,v^{\,\prime}_{\,i}\,P_{\,V^{\,\prime}_{\,i}}\,S^{\,-\, 1}_{V}\,P_{\,V_{\,i}}\,(\,f\,)\;\; \;\forall\; f \,\in\, H.\] 
\end{definition}

\begin{definition}\cite{Kutyniok}
A family of bounded operators \,$\left\{\,T_{i}\,\right\}_{i \,\in\, I}$\, on \,$H$\, is called a resolution of identity operator on \,$H$\; if for all \,$f \,\in\, H$, we have \,$f \,=\, \sum\limits_{\,i \,\in\, I}\,T_{i}\,(\,f\,)$, provided the series converges unconditionally for all \,$f \,\in\, H$.
\end{definition}

There are several ways to introduced the tensor product of Hilbert spaces.\;The tensor product of Hilbert spaces \,$H$\, and \,$K$\, is a certain linear space of operators which was represented by Folland in \cite{Folland}, Kadison and Ringrose in \cite{Kadison}.\\

\begin{definition}\cite{Upender}
The tensor product of \,$H$\, and \,$K$\, is denoted by \,$H \,\otimes\, K$\, and it is defined to be an inner product space associated with the inner product  
\[\left<\,f \,\otimes\, g \,,\, f^{\,\prime} \,\otimes\, g^{\,\prime}\,\right> \,=\, \left<\,f  \,,\, f^{\,\prime}\,\right>_{1}\;\left<\,g  \,,\, g^{\,\prime}\,\right>_{2}\; \;\forall\; f,\, f^{\,\prime} \,\in\, H\; \;\&\; \;g,\, g^{\,\prime} \,\in\, K.\]
The norm on \,$H \,\otimes\, K$\, is defined by 
\begin{equation}\label{eqn1.01}
\left\|\,f \,\otimes\, g\,\right\| \,=\, \|\,f\,\|_{1}\;\|\,g\,\|_{2}\; \;\forall\; f \,\in\, H\; \;\&\; \,g \,\in\, K.
\end{equation}
The space \,$H \,\otimes\, K$\, is complete with respect to the above inner product.\;Therefore, the space \,$H \,\otimes\, K$\, is an Hilbert space.\\     
\end{definition} 

For \,$Q \,\in\, \mathcal{B}\,(\,H\,)$\, and \,$T \,\in\, \mathcal{B}\,(\,K\,)$, the tensor product of operators \,$Q$\, and \,$T$\, is denoted by \,$Q \,\otimes\, T$\, and defined as 
\[\left(\,Q \,\otimes\, T\,\right)\,A \,=\, Q\,A\,T^{\,\ast}\; \;\forall\; \;A \,\in\, H \,\otimes\, K.\]

\begin{theorem}\cite{Folland, Li}\label{th1.1}
Let \,$Q,\, Q^{\prime} \,\in\, \mathcal{B}\,(\,H\,)$\, and \,$T,\, T^{\prime} \,\in\, \mathcal{B}\,(\,K\,)$.\,Then \begin{itemize}
\item[(I)]\hspace{.2cm} \,$Q \,\otimes\, T \,\in\, \mathcal{B}\,(\,H \,\otimes\, K\,)$\, and \,$\left\|\,Q \,\otimes\, T\,\right\| \,=\, \|\,Q\,\|\; \|\,T\,\|$.
\item[(II)]\hspace{.2cm} \,$\left(\,Q \,\otimes\, T\,\right)\,(\,f \,\otimes\, g\,) \,=\, Q\,(\,f\,) \,\otimes\, T\,(\,g\,)$\, for all \,$f \,\in\, H,\, g \,\in\, K$.
\item[(III)]\hspace{.2cm} $\left(\,Q \,\otimes\, T\,\right)\,\left(\,Q^{\,\prime} \,\otimes\, T^{\,\prime}\,\right) \,=\, (\,Q\,Q^{\,\prime}\,) \,\otimes\, (\,T\,T^{\,\prime}\,)$. 
\item[(IV)]\hspace{.2cm} \,$Q \,\otimes\, T$\, is invertible if and only if \,$Q$\, and \,$T$\, are invertible, in which case \,$\left(\,Q \,\otimes\, T\,\right)^{\,-\, 1} \,=\, \left(\,Q^{\,-\, 1} \,\otimes\, T^{\,-\, 1}\,\right)$.
\item[(V)]\hspace{.2cm} \,$\left(\,Q \,\otimes\, T\,\right)^{\,\ast} \,=\, \left(\,Q^{\,\ast} \,\otimes\, T^{\,\ast}\,\right)$.  
\item[(VI)]\hspace{.2cm} Let \,$f,\, f^{\,\prime} \,\in\, H \,\setminus\, \{\,0\}$\, and \,$g,\, g^{\,\prime} \,\in\, K \,\setminus\, \{\,0\}$\,.\;If \,$f \,\otimes\, g \,=\, f^{\,\prime} \,\otimes\, g^{\,\prime}$, then there exist constants \,$a$\, and \,$b$\, with \,$a\,b \,=\, 1$\, such that \,$f \,=\, a\,f^{\,\prime}$\, and \,$g \,=\, b\,g^{\,\prime}$.  
\end{itemize}
\end{theorem}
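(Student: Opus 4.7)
The plan is to dispatch the six parts roughly in the order $(II) \Rightarrow (III) \Rightarrow (V) \Rightarrow (I) \Rightarrow (IV) \Rightarrow (VI)$, using the definition $(Q \otimes T)A = QAT^{\,\ast}$ on the space $H \otimes K$ (viewed as the appropriate completion of operators) together with the identification of the elementary tensor $f \otimes g$ with the rank-one operator $k \,\mapsto\, \left<\,k \,,\, g\,\right>_{2}\, f$.

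For (II), I would simply compute $(Q \otimes T)(f \otimes g)(k) \,=\, Q \circ (f \otimes g) \circ T^{\,\ast}(k) \,=\, \left<\,T^{\,\ast}k \,,\, g\,\right>_{2}\, Q(f) \,=\, \left<\,k \,,\, T(g)\,\right>_{2}\, Q(f)$, which is exactly $(Q(f) \otimes T(g))(k)$. Part (III) then follows on elementary tensors by $(Q \otimes T)(Q^{\,\prime} \otimes T^{\,\prime})(f \otimes g) \,=\, (Q \otimes T)(Q^{\,\prime}f \otimes T^{\,\prime}g) \,=\, QQ^{\,\prime}f \otimes TT^{\,\prime}g$, and extends by linearity and continuity to all of $H \otimes K$ once boundedness is established. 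For (V), I would verify $\left<\,(Q \otimes T)(f \otimes g) \,,\, f^{\,\prime} \otimes g^{\,\prime}\,\right> \,=\, \left<\,Qf \,,\, f^{\,\prime}\,\right>_{1}\left<\,Tg \,,\, g^{\,\prime}\,\right>_{2} \,=\, \left<\,f \otimes g \,,\, (Q^{\,\ast} \otimes T^{\,\ast})(f^{\,\prime} \otimes g^{\,\prime})\,\right>$ and extend by density of finite linear combinations of elementary tensors.

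For (I), the inequality $\|Q \otimes T\| \,\leq\, \|Q\|\,\|T\|$ is the substantive point, since a generic $A \in H \otimes K$ is not an elementary tensor. I would expand $A$ in an orthonormal basis of the form $\{e_{i} \otimes f_{j}\}$ arising from orthonormal bases of $H$ and $K$, write $A \,=\, \sum_{i,\,j} c_{\,ij}\, e_{i} \otimes f_{j}$, and use parts (II) and (V) together with the Parseval identity to estimate $\|(Q \otimes T)A\|^{\,2}$ in two stages: first bound $\|(I_{H} \otimes T)A\| \,\leq\, \|T\|\, \|A\|$ by fixing the $H$-index and using $\|T\| \cdot \|\cdot\|_{2}$, then similarly bound $\|(Q \otimes I_{K})B\| \,\leq\, \|Q\|\,\|B\|$, and compose. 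The reverse inequality $\|Q \otimes T\| \,\geq\, \|Q\|\,\|T\|$ follows from $\|(Q \otimes T)(f \otimes g)\| \,=\, \|Qf\|_{1}\,\|Tg\|_{2}$ together with (\ref{eqn1.01}) by choosing near-extremizing $f,\,g$.

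Part (IV) is immediate in one direction from (III): if $Q,\,T$ are invertible, then $(Q \otimes T)(Q^{\,-1} \otimes T^{\,-1}) \,=\, I_{H} \otimes I_{K} \,=\, I_{H \otimes K}$ and symmetrically on the other side. For the converse, assuming $Q \otimes T$ is invertible with inverse $R$, I would use (II) to argue that since $(Q \otimes T)(f \otimes g) \,=\, Qf \otimes Tg$ cannot vanish for nonzero $f,\,g$, both $Q$ and $T$ must be injective; surjectivity will follow by applying $R$ to elementary tensors and invoking (VI) to extract $Q^{\,-1}$ and $T^{\,-1}$ as genuine operators. For (VI), the clean argument is to take inner products with $h \otimes k$ for arbitrary $h \in H$, $k \in K$, giving $\left<\,f \,,\, h\,\right>_{1}\left<\,g \,,\, k\,\right>_{2} \,=\, \left<\,f^{\,\prime} \,,\, h\,\right>_{1}\left<\,g^{\,\prime} \,,\, k\,\right>_{2}$; fixing a $k_{0}$ with $\left<\,g \,,\, k_{0}\,\right>_{2} \neq 0$ forces $f$ to be a scalar multiple of $f^{\,\prime}$, similarly $g \,=\, b\,g^{\,\prime}$, and the relation $f \otimes g \,=\, f^{\,\prime} \otimes g^{\,\prime}$ forces $ab \,=\, 1$.

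The main obstacle will be part (I), specifically the cross-norm inequality $\|Q \otimes T\| \,\leq\, \|Q\|\,\|T\|$ on non-elementary tensors; the two-stage factorization $Q \otimes T \,=\, (Q \otimes I_{K})(I_{H} \otimes T)$ reduces it to bounding each factor separately, and this is where the basis expansion and Parseval argument become essential.
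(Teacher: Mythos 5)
First, a point of reference: the paper itself gives no proof of Theorem \ref{th1.1} --- it is quoted from \cite{Folland, Li} --- so your proposal can only be measured against the standard arguments in those sources. Most of it matches them and is sound: the computation for (II) via the rank-one identification $(f \otimes g)(k) = \left<\,k \,,\, g\,\right>_{2}\, f$ together with the definition $(Q \otimes T)\,A = Q\,A\,T^{\,\ast}$ is correct (for (III) you could even avoid density entirely, since $(Q \otimes T)(Q^{\,\prime} \otimes T^{\,\prime})\,A = Q\,Q^{\,\prime}\,A\,T^{\,\prime\,\ast}\,T^{\,\ast} = (Q\,Q^{\,\prime})\,A\,(T\,T^{\,\prime})^{\,\ast}$ holds for every $A$ directly); the two-stage factorization $Q \otimes T = (Q \otimes I_{K})(I_{H} \otimes T)$ with the expansion $A = \sum_{i} e_{i} \otimes g_{i}$, $\|A\|^{2} = \sum_{i}\|g_{i}\|_{2}^{2}$, correctly yields $\|Q \otimes T\| \leq \|Q\|\,\|T\|$, and the near-extremizer argument gives the reverse inequality; (V) and (VI) are the standard proofs. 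One organizational remark: since you invoke (VI) inside (IV), the order should place (VI) before (IV).

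The genuine gap is in the converse half of (IV). Your injectivity argument for $Q$ and $T$ is fine, but the step ``surjectivity will follow by applying $R$ to elementary tensors and invoking (VI)'' fails as stated: (VI) compares two expressions that are \emph{both already known} to be elementary tensors, whereas $R\,(f^{\,\prime} \otimes g^{\,\prime})$ is merely some element $A \in H \otimes K$ with $(Q \otimes T)\,A = f^{\,\prime} \otimes g^{\,\prime}$, and nothing forces $A$ to be an elementary tensor. (It is one a posteriori, because $R = Q^{\,-1} \otimes T^{\,-1}$, but assuming that is circular.) A correct route: invertibility of $Q \otimes T$ gives a constant $c > 0$ with $\left\|\,(Q \otimes T)\,X\,\right\| \geq c\,\|X\|$ for all $X$; applying this to $f \otimes g_{0}$ for a fixed $g_{0}$ with $T\,g_{0} \neq 0$ (such $g_{0}$ exists, else $Q \otimes T = 0$) and using (I)--(II) yields $\|Q\,f\|_{1}\,\|T\,g_{0}\|_{2} \geq c\,\|f\|_{1}\,\|g_{0}\|_{2}$, so $Q$ is bounded below and has closed range. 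By (V), $(Q \otimes T)^{\,\ast} = Q^{\,\ast} \otimes T^{\,\ast}$ is also invertible, so your elementary-tensor injectivity argument applied to it shows $Q^{\,\ast}$ is injective, i.e.\ $Q$ has dense range; hence $Q$ is invertible, and symmetrically so is $T$. The identity $\left(\,Q \otimes T\,\right)^{\,-1} = Q^{\,-1} \otimes T^{\,-1}$ then follows from (III) exactly as in your forward direction. With that repair, your proposal is complete.
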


\section{Fusion frame in tensor product of Hilbert spaces}

\smallskip\hspace{.6 cm} In this section, fusion frame in the Hilbert space \,$H \,\otimes\, K$\, have been discussed and some results associated to fusion frame in \,$H \,\otimes\, K$\, are likely to be established.\;At the end, we discuss the relationship among the resolution of the identity operator on \,$H \,\otimes\, K$\, and resolutions of the identity operators on \,$H$\, and \,$K$, respectively.   

\begin{definition}
Let \,$\left\{\,v_{\,i}\,\right\}_{\, i \,\in\, I}$\, and \,$\left\{\,w_{\,j}\,\right\}_{ j \,\in\, J}$\, be two families of positive weights i\,.\,e., \,$v_{\,i} \,>\, 0\, \;\forall\; i \,\in\, I$\, and \,$w_{\,j} \,>\, 0\, \;\forall\; j \,\in\, J$\, and \,$\left\{\,V_{i} \,\otimes\, W_{j} \,:\, (\,i \,,\, j\,) \,\in\, I \,\times\, J\,\right\}$\, be a family of closed subspaces of \,$H \,\otimes\, K$.\,Then the family \,$V \otimes\, W =\left\{\,\left(\,V_{i} \otimes W_{j},\, v_{\,i}\,w_{\,j}\,\right)\,\right\}_{i,\, j}$\, is called a fusion frame for \,$H \otimes K$, if there exist \,$0 < A \leq B < \infty$\, such that
\[A\, \left\|\,f \otimes g\,\right\|^{\,2} \,\leq\, \sum\limits_{i,\, j}\,v^{\,2}_{\,i}\,w^{\,2}_{j}\,\left\|\,P_{\,V_{i} \,\otimes\, W_{j}}\,(\,f \otimes g\,)\,\right\|^{\,2} \,\leq\, B\, \left\|\,f \otimes g\,\right\|^{\,2}\; \;\forall\; f \otimes g \,\in\, H \,\otimes\, K,\]
where \,$P_{\,V_{i} \,\otimes\, W_{j}}$\, is the orthogonal projection of \,$H \,\otimes\, K$\, onto \,$V_{i} \,\otimes\, W_{j}$.\;The constants \,$A$\, and \,$B$\, are called the frame bounds.\;If \,$A \,=\, B$\, then it is called a tight fusion frame for \,$H \,\otimes\, K$.\;If the family \,$V \,\otimes\, W$\, satisfies the inequality
\[\sum\limits_{i,\, j}\,v^{\,2}_{\,i}\,w^{\,2}_{j}\,\left\|\,P_{\,V_{i} \,\otimes\, W_{j}}\,(\,f \,\otimes\, g\,)\,\right\|^{\,2} \,\leq\, B\, \left\|\,f \,\otimes\, g\,\right\|^{\,2}\; \;\forall\; f \,\otimes\, g \,\in\, H \,\otimes\, K,\]
then it is called a fusion Bessel sequence in \,$H \,\otimes\, K$\, with bound \,$B$.    
\end{definition}

\begin{definition}
For \,$i \,\in\, I$\, and \,$j \,\in\, J$, define the space \,$l^{\,2}\,\left(\,\left\{\,V_{i} \,\otimes\, W_{j}\,\right\}\,\right)$
\[\,=\, \left\{\,\left\{\,f_{\,i} \,\otimes\, g_{\,j}\,\right\} \,:\, f_{\,i} \,\otimes\, g_{\,j} \,\in\, V_{i} \,\otimes\, W_{j}, \;\text{and}\; \;\sum\limits_{i,\, j}\,\left\|\,f_{\,i} \,\otimes\, g_{\,j}\,\right\|^{\,2} \,<\, \infty\,\right\}\]
with inner product 
\[\left<\,\left\{\,f_{\,i} \,\otimes\, g_{\,j}\,\right\} \,,\, \left\{\,f^{\,\prime}_{\,i} \,\otimes\, g^{\,\prime}_{\,j}\,\right\}\,\right>_{l^{\,2}} \,=\, \sum\limits_{i,\, j}\,\left<\,f_{\,i} \,\otimes\, g_{\,j}\, \,,\, f^{\,\prime}_{\,i} \,\otimes\, g^{\,\prime}_{\,j}\,\right>\hspace{2.8cm}\]
\[=\,\sum\limits_{i,\, j}\,\left<\,f_{\,i} \,,\, f^{\,\prime}_{\,i}\,\right>_{1}\,\left<\,g_{\,j} \,,\, g^{\,\prime}_{\,j}\,\right>_{2} \,=\, \left(\,\sum\limits_{\,i \,\in\, I}\,\left<\,f_{\,i} \,,\, f^{\,\prime}_{\,i}\,\right>_{1}\,\right)\,\left(\,\sum\limits_{\,j \,\in\, J}\,\left<\,g_{\,j} \,,\, g^{\,\prime}_{\,j}\,\right>_{2}\,\right)\hspace{.7cm}\]
\[=\, \left<\,\left\{\,f_{\,i}\,\right\}_{ i \,\in\, I} \,,\, \left\{\,f^{\,\prime}_{\,i}\,\right\}_{ i \,\in\, I}\,\right>_{l^{\,2}\left(\,\left\{\,V_{i}\,\right\}_{ i \,\in\, I}\,\right)}\,\left<\,\left\{\,g_{\,j}\,\right\}_{ j \,\in\, J} \,,\, \left\{\,g^{\,\prime}_{\,j}\,\right\}_{ j \,\in\, J}\,\right>_{l^{\,2}\left(\,\left\{\,W_{j}\,\right\}_{j \,\in\, J}\,\right)}.\]
It is easy to verify that the space \,$l^{\,2}\,\left(\,\left\{\,V_{i} \,\otimes\, W_{j}\,\right\}\,\right)$\, is an Hilbert space with respect to the above inner product.
\end{definition}

\begin{note}\label{note1.1}
Since \,$\left\{\,V_{i}\,\right\}_{ i \,\in\, I},\; \left\{\,W_{j}\,\right\}_{ j \,\in\, J}$\, and \,$\left\{\,V_{i} \,\otimes\, W_{j} \,\right\}_{i,\, j}$\, are the families of closed subspaces of \,$H,\, K$\, and \,$H \,\otimes\, K$\, respectively, it is easy to verify that \,$P_{\,V_{i} \,\otimes\, W_{j}} \,=\, P_{\,V_{i}} \,\otimes\, P_{\,W_{j}}$.
\end{note}

For the remaining part of this paper, we denote the collections \,$\left\{\,\left(\,V_{i},\, v_{\,i}\,\right)\,\right\}_{\, i \,\in\, I}$, \,$\left\{\,\left(\,W_{j},\, w_{\,j}\,\right)\,\right\}_{ j \,\in\, J},\,\left\{\,\left(\,V_{i} \,\otimes\, W_{j},\, v_{\,i}\,w_{\,j}\,\right)\,\right\}_{i,\, j}$\, and \,$\left\{\,\left(\,V^{\,\prime}_{i} \,\otimes\, W^{\,\prime}_{j},\, v^{\,\prime}_{\,i}\,w^{\,\prime}_{\,j}\,\right)\,\right\}_{i,\, j}$\, by \,$V,\, W$, \,$V \,\otimes\, W$\, and \,$V^{\,\prime} \,\otimes\, W^{\,\prime}$, respectively.\\   

\begin{theorem}\label{thm2}
Let \,$V$\, and \,$W$\, be the families of weighted closed subspaces in \,$H$\, and \,$K$, respectively.\;Then \,$V$\, and \,$W$\, are fusion frames for \,$H$\, and \,$K$\,  if and only if \,$V \,\otimes\, W$\, is a fusion frame for \,$H \,\otimes\, K$. 
\end{theorem}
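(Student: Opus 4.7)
The plan is to exploit the factorization identity \,$P_{\,V_{i} \otimes W_{j}} = P_{\,V_{i}} \otimes P_{\,W_{j}}$\, from Note \ref{note1.1} together with parts (II) and the norm identity (\ref{eqn1.01}) to reduce the inequality defining the fusion frame on \,$H \otimes K$\, to the product of the corresponding inequalities on \,$H$\, and \,$K$. The central computation is, for any elementary tensor \,$f \otimes g$,
\[
\left\|\,P_{\,V_{i} \otimes W_{j}}(f \otimes g)\,\right\|^{\,2} \,=\, \left\|\,P_{\,V_{i}}(f) \otimes P_{\,W_{j}}(g)\,\right\|^{\,2} \,=\, \left\|\,P_{\,V_{i}}(f)\,\right\|_{1}^{\,2}\,\left\|\,P_{\,W_{j}}(g)\,\right\|_{2}^{\,2},
\]
from which, after multiplying by \,$v_{i}^{\,2} w_{j}^{\,2}$\, and summing over \,$(i,j)$, one obtains the product factorization
\[
\sum_{i,j}\,(v_{i} w_{j})^{\,2}\,\left\|\,P_{\,V_{i} \otimes W_{j}}(f \otimes g)\,\right\|^{\,2} \,=\, \Bigl(\sum_{i}\,v_{i}^{\,2}\,\left\|\,P_{\,V_{i}}(f)\,\right\|_{1}^{\,2}\Bigr)\Bigl(\sum_{j}\,w_{j}^{\,2}\,\left\|\,P_{\,W_{j}}(g)\,\right\|_{2}^{\,2}\Bigr).
\]

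For the forward implication, if \,$V$\, and \,$W$\, are fusion frames with bounds \,$(A_{1},B_{1})$\, and \,$(A_{2},B_{2})$, then multiplying the two inequalities yields bounds \,$A_{1} A_{2}$\, and \,$B_{1} B_{2}$\, for \,$V \otimes W$\, on elementary tensors, invoking \,$\|f \otimes g\|^{\,2} = \|f\|_{1}^{\,2}\|g\|_{2}^{\,2}$\, on the outer terms. This direction is essentially an algebraic separation of variables.

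For the converse, the strategy is to fix a unit vector \,$g_{0} \in K$\, and substitute \,$f \otimes g_{0}$\, into the fusion-frame inequality of \,$V \otimes W$. Using the factorization and \,$\|f \otimes g_{0}\|^{\,2} = \|f\|_{1}^{\,2}$, this produces
\[
\frac{A}{C_{W}}\,\|f\|_{1}^{\,2} \,\leq\, \sum_{i}\,v_{i}^{\,2}\,\left\|\,P_{\,V_{i}}(f)\,\right\|_{1}^{\,2} \,\leq\, \frac{B}{C_{W}}\,\|f\|_{1}^{\,2},
\]
where \,$C_{W} = \sum_{j}\,w_{j}^{\,2}\,\|P_{\,W_{j}}(g_{0})\|_{2}^{\,2}$. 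The analogous substitution with a unit \,$f_{0} \in H$\, extracts the fusion-frame inequality for \,$W$\, with constant \,$D_{V} = \sum_{i}\,v_{i}^{\,2}\,\|P_{\,V_{i}}(f_{0})\|_{1}^{\,2}$.

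The main obstacle is verifying \,$0 < C_{W} < \infty$\, and \,$0 < D_{V} < \infty$, since the derived bounds are otherwise degenerate. Positivity of \,$C_{W}$\, follows because if it vanished, the right-hand side of the \,$V \otimes W$\, lower bound applied to \,$f \otimes g_{0}$\, would be zero while the left is \,$A\|f\|_{1}^{\,2} > 0$, a contradiction. Finiteness follows from the upper bound of \,$V \otimes W$\, applied to \,$f_{0} \otimes g_{0}$, which gives \,$D_{V} \cdot C_{W} \leq B$; since \,$D_{V} > 0$\, (by the analogous positivity argument), this forces \,$C_{W} < \infty$. Once these auxiliary bounds are in place, the two chains of inequalities give \,$V$\, and \,$W$\, as fusion frames with explicit bounds, completing the equivalence.
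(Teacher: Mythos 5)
Your proposal is correct and follows essentially the same route as the paper: the forward direction multiplies the two fusion-frame inequalities and uses \,$P_{\,V_{i}\otimes W_{j}} = P_{\,V_{i}}\otimes P_{\,W_{j}}$\, together with \,$\|f\otimes g\| = \|f\|_{1}\|g\|_{2}$, while the converse factorizes the double sum and divides out the complementary factor (the paper uses an arbitrary nonzero \,$g$\, where you fix a unit vector \,$g_{0}$, an immaterial difference). Your explicit verification that \,$0 < C_{W} < \infty$\, is a small improvement in rigor over the paper, which only remarks that the sums are nonzero and leaves finiteness implicit in the upper frame bound.
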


\begin{proof}
First we suppose that \,$V$\, and \,$W$\, are fusion frames for \,$H$\, and \,$K$.\;Then there exist positive constants \,$(\,A,\, B\,)$\, and \,$(\,C,\, D\,)$\, such that
\begin{equation}\label{eq1}
A\,\left\|\,f \,\right\|_{\,1}^{\,2} \,\leq\, \sum\limits_{\,i \,\in\, I}\, v_{i}^{\,2} \,\left\|\, P_{\,V_{i}}\,(\,f\,) \,\right\|_{1}^{\,2}  \,\leq\, B\,\left\|\, f \, \right\|_{1}^{\,2}\; \;\forall\; f \,\in\, H
\end{equation}
\begin{equation}\label{eq1.1}
C\,\left\|\,g \,\right\|_{2}^{\,2} \,\leq\, \sum\limits_{\,j \,\in\, J}\, w_{j}^{\,2}\, \left\|\, P_{\,W_{j}}\,(\,g\,) \,\right\|_{2}^{\,2} \,\leq\, D\,\left\|\, g \, \right\|_{2}^{\,2}\; \;\forall\; g \,\in\, K.
\end{equation}
Multiplying (\ref{eq1}) and (\ref{eq1.1}), and using the definition of norm on \,$H \,\otimes\, K$, we get
\[A\,C\left\|\,f\,\right\|_{1}^{\,2}\,\left\|\,g\,\right\|_{2}^{\,2} \,\leq \left(\sum\limits_{\,i \,\in\, I} v_{i}^{\,2}\left\|\, P_{\,V_{i}}\,(\,f\,)\,\right\|_{1}^{\,2}\right)\left(\,\sum\limits_{\,j \,\in\, J} w_{j}^{\,2} \left\|\, P_{\,W_{j}}\,(\,g\,) \,\right\|_{2}^{\,2}\right) \,\leq B\,D\left\|\,f \,\right\|_{1}^{\,2}\,\left\|\,g \,\right\|_{2}^{\,2}\]
\[\Rightarrow\, A\,C\,\left\|\,f \,\otimes\, g\,\right\|^{\,2} \,\leq\, \sum\limits_{i,\, j}\,v^{\,2}_{\,i}\,w^{\,2}_{\,j}\,\left\|\, P_{\,V_{i}}\,(\,f\,) \,\right\|_{1}^{\,2}\,\left\|\, P_{\,W_{j}}\,(\,g\,) \,\right\|_{2}^{\,2} \,\leq\, B\,D\,\left\|\,f \,\otimes\, g\,\right\|^{\,2}\]
\[\Rightarrow\, A\,C\,\left\|\,f \,\otimes\, g\,\right\|^{\,2} \,\leq\, \sum\limits_{i,\, j}\,v^{\,2}_{\,i}\,w^{\,2}_{\,j}\,\left\|\, P_{\,V_{i}}\,(\,f\,) \otimes\, P_{\,W_{j}}\,(\,g\,) \,\right\|^{\,2} \,\leq\, B\,D\,\left\|\,f \,\otimes\, g\,\right\|^{\,2}.\hspace{.1cm}\]
Therefore, for all \,$f \,\otimes\, g \,\in\, H \,\otimes\, K$, we have
\[A\,C\,\left\|\,f \,\otimes\, g\,\right\|^{\,2} \,\leq\, \sum\limits_{i,\, j}\,v^{\,2}_{\,i}\,w^{\,2}_{\,j}\,\left\|\, \left(\,P_{\,V_{i}} \,\otimes\, P_{\,W_{j}}\,\right)\,(\,f \,\otimes\, g\,) \,\right\|^{\,2} \,\leq\, B\,D\,\left\|\,f \,\otimes\, g\,\right\|^{\,2}\]
\[\Rightarrow\, A\,C\,\left\|\,f \,\otimes\, g\,\right\|^{\,2} \,\leq\, \sum\limits_{i,\, j}\,v^{\,2}_{\,i}\,w^{\,2}_{\,j}\,\left\|\,P_{\,V_{i} \,\otimes\, W_{j}}\,(\,f \,\otimes\, g\,)\,\right\|^{\,2} \,\leq\, B\,D\,\left\|\,f \,\otimes\, g\,\right\|^{\,2}.\hspace{.53cm}\]  
This shows that \,$V \,\otimes\, W$\, is a fusion frame for \,$H \,\otimes\, K$\, with bounds \,$A\,C$\, and \,$B\,D$. \\

Conversely, suppose that \,$V \,\otimes\, W$\, is a fusion frame for \,$H \,\otimes\, K$\, with bounds \,$A\, \;\text{and}\; \,B$.\;Then, for each \,$f \,\otimes\, g \,\in\, H \,\otimes\, K \,-\, \{\,\theta \,\otimes\, \theta\,\}$, we have
\[A\, \left\|\,f \,\otimes\, g\,\right\|^{\,2} \,\leq\, \sum\limits_{i,\, j}\,v^{\,2}_{\,i}\,w^{\,2}_{j}\,\left\|\,P_{\,V_{i} \,\otimes\, W_{j}}\,(\,f \,\otimes\, g\,)\,\right\|^{\,2} \,\leq\, B\, \left\|\,f \,\otimes\, g\,\right\|^{\,2}\]
\[\Rightarrow\, A\,\left\|\,f \,\right\|_{\,1}^{\,2}\,\left\|\,g \,\right\|_{\,2}^{\,2} \,\leq\, \sum\limits_{i,\, j}\,v^{\,2}_{\,i}\,w^{\,2}_{\,j}\,\left\|\, P_{\,V_{i}}\,(\,f\,) \otimes\, P_{\,W_{j}}\,(\,g\,) \,\right\|^{\,2} \,\leq\, B\,\left\|\,f \,\right\|_{\,1}^{\,2}\,\left\|\,g \,\right\|_{\,2}^{\,2}\hspace{1.6cm}\]
\[\Rightarrow A\left\|\,f\,\right\|_{1}^{\,2}\,\left\|\,g\,\right\|_{2}^{\,2} \,\leq \left(\sum\limits_{\,i \,\in\, I} v_{i}^{\,2}\left\|\, P_{\,V_{i}}\,(\,f\,)\,\right\|_{1}^{\,2}\right)\left(\,\sum\limits_{\,j \,\in\, J} w_{j}^{\,2} \left\|\, P_{\,W_{j}}\,(\,g\,) \,\right\|_{2}^{\,2}\right) \,\leq B\left\|\,f \,\right\|_{1}^{\,2}\,\left\|\,g \,\right\|_{2}^{\,2}.\]
Since \,$f \,\otimes\, g$\, is non-zero vector, \,$f$\, and \,$g$\, are also non-zero vectors and therefore \,$\sum\limits_{\,i \,\in\, I}\, v_{i}^{\,2} \,\left\|\, P_{\,V_{i}}\,(\,f\,) \,\right\|_{1}^{\,2}$\, and \,$\sum\limits_{\,j \,\in\, J}\, w_{j}^{\,2}\, \left\|\, P_{\,W_{j}}\,(\,g\,) \,\right\|_{2}^{\,2}$\, are non-zero.
\[\Rightarrow\, \dfrac{A\,\|\,g\,\|_{2}^{\,2}}{\sum\limits_{\,j \,\in\, J}\, w_{j}^{\,2}\, \left\|\, P_{\,W_{j}}\,(\,g\,) \,\right\|_{2}^{\,2}}\,\left\|\,f \,\right\|_{1}^{\,2}\, \leq\, \sum\limits_{\,i \,\in\, I}\, v_{i}^{\,2} \,\left\|\, P_{\,V_{i}}\,(\,f\,) \,\right\|_{1}^{\,2} \,\leq\, \dfrac{B\,\|\,g\,\|_{2}^{\,2}}{\sum\limits_{\,j \,\in\, J}\, w_{j}^{\,2}\, \left\|\, P_{\,W_{j}}\,(\,g\,) \,\right\|_{2}^{\,2}}\,\left\|\,f \,\right\|_{1}^{\,2}\]
\[\Rightarrow\, A_{\,1} \,\left\|\,f \,\right\|_{1}^{\,2} \,\leq\, \sum\limits_{\,i \,\in\, I}\, v_{i}^{\,2} \,\left\|\,P_{\,V_{i}}\,(\,f\,) \,\right\|_{1}^{\,2}  \,\leq\, B_{\,1}\, \left\|\, f \, \right\|_{1}^{\,2}\; \;\forall\; f \,\in\, H,\]
where \,$A_{\,1} \,=\, \dfrac{A\,\|\,g\,\|_{2}^{\,2}}{\sum\limits_{\,j \,\in\, J}\, w_{j}^{\,2}\, \left\|\, P_{\,W_{j}}\,(\,g\,) \,\right\|_{2}^{\,2}}$\; and \,\,$B_{\,1} \,=\, \dfrac{B\,\|\,g\,\|_{2}^{\,2}}{\sum\limits_{\,j \,\in\, J}\, w_{\,j}^{\,2}\, \left\|\, P_{\,W_{j}}\,(\,g\,)\,\right\|_{2}^{\,2}}$.\;This shows that  \,$V$\, is a fusion frame for \,$H$.\;Similarly, it can be shown that \,$W$\, is a fusion frame for \,$K$.          
\end{proof}

\begin{note}
Let \,$V \,\otimes\, W$\, be a fusion frame for \,$H \,\otimes\, K$.\;According to the definition (\ref{def1}), the corresponding frame operator \,$S_{\,V \,\otimes\, W} \,:\, H \,\otimes\, K \,\to\, H \,\otimes\, K$\, is given by
\[S_{\,V \,\otimes\, W}\,(\,f \,\otimes\, g\,) \,=\, \sum\limits_{i,\, j}\,v^{\,2}_{\,i}\,w^{\,2}_{\,j}\,P_{\,V_{i} \,\otimes\, W_{j}}\,(\,f \,\otimes\, g\,)\; \;\forall\; f \,\otimes\, g \,\in\, H \,\otimes\, K.\]  
\end{note}

\begin{theorem}
Let \,$S_{V},\, S_{W}$\, and \,$S_{\,V \,\otimes\, W}$\, be the corresponding frame operators for the fusion frames \,$V$, \,$W$\, and \,$V \,\otimes\, W$, respectively.\,Then \,$S_{\,V \,\otimes\, W} \,=\, S_{\,V} \,\otimes\, S_{\,W}\; \;\text{and}\;\; \;S^{\,-\, 1}_{\,V \,\otimes\, W} \,=\, S^{\,-\, 1}_{\,V} \,\otimes\, S^{\,-\, 1}_{\,W}$.
\end{theorem}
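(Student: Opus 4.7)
The plan is to verify the identity first on elementary tensors $f \otimes g$, then extend by linearity and continuity, using the explicit formula for $S_{V \otimes W}$ given just before the theorem, together with Note \ref{note1.1} (which identifies $P_{V_i \otimes W_j}$ with $P_{V_i} \otimes P_{W_j}$) and Theorem \ref{th1.1}.

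First, I would take an arbitrary elementary tensor $f \otimes g \in H \otimes K$ and compute
\[
S_{V \otimes W}(f \otimes g) \,=\, \sum_{i,j} v_i^{\,2} w_j^{\,2}\, P_{V_i \otimes W_j}(f \otimes g) \,=\, \sum_{i,j} v_i^{\,2} w_j^{\,2}\, \bigl(P_{V_i} \otimes P_{W_j}\bigr)(f \otimes g),
\]
using Note \ref{note1.1}. Applying Theorem \ref{th1.1}(II), each summand equals $v_i^{\,2} w_j^{\,2}\, P_{V_i}(f) \otimes P_{W_j}(g)$. The next step is to use bilinearity and continuity of the tensor product to factor the double sum as
\[
\Bigl(\sum_{i \in I} v_i^{\,2}\, P_{V_i}(f)\Bigr) \otimes \Bigl(\sum_{j \in J} w_j^{\,2}\, P_{W_j}(g)\Bigr) \,=\, S_V(f) \otimes S_W(g),
\]
which by Theorem \ref{th1.1}(II) equals $(S_V \otimes S_W)(f \otimes g)$. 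This establishes the first identity on elementary tensors.

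To extend to arbitrary elements of $H \otimes K$, I would invoke linearity of both operators and density of the span of elementary tensors in $H \otimes K$, together with boundedness of $S_{V \otimes W}$ and of $S_V \otimes S_W$ (the latter being bounded by Theorem \ref{th1.1}(I)). Finally, for the inverse, since $V$ and $W$ are fusion frames, both $S_V$ and $S_W$ are invertible, so by Theorem \ref{th1.1}(IV) the operator $S_V \otimes S_W$ is invertible with inverse $S_V^{-1} \otimes S_W^{-1}$, and combining with the first identity yields $S_{V \otimes W}^{-1} = S_V^{-1} \otimes S_W^{-1}$.

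The step most likely to require care is the factorization of the double sum into a tensor product of two sums; one must justify swapping the order of summation and pulling the sums outside the tensor product. This follows from the bilinearity and joint continuity of $\otimes$ on $H \times K$, combined with the absolute convergence of $\sum_i v_i^{\,2} P_{V_i}(f)$ and $\sum_j w_j^{\,2} P_{W_j}(g)$ in the respective Hilbert space norms guaranteed by Theorem \ref{thm2}. Everything else is a routine application of the previously stated properties of tensor products of operators.
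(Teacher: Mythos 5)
Your argument is correct and takes essentially the same route as the paper: the paper performs the identical computation on an elementary tensor \,$f \otimes g$\, (using Note (\ref{note1.1}) and Theorem (\ref{th1.1})), factors the double sum into \,$S_{V}(f) \otimes S_{W}(g)$, and deduces \,$S_{V \otimes W}^{-1} = S_{V}^{-1} \otimes S_{W}^{-1}$\, from invertibility of \,$S_{V},\,S_{W}$\, via Theorem (\ref{th1.1})(IV), your density/continuity extension beyond elementary tensors being a detail the paper leaves implicit. One small quibble: the convergence of \,$\sum_{i \in I} v_{i}^{\,2}\,P_{V_{i}}(f)$\, is not supplied by Theorem (\ref{thm2}) but is the (unconditional, not necessarily absolute) convergence already built into \,$S_{V}$\, being the well-defined frame operator of the fusion frame \,$V$.
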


\begin{proof}
For each \,$f \,\otimes\, g \,\in\, H \,\otimes\, K$, we have
\[S_{\,V \,\otimes\, W}\,(\,f \,\otimes\, g\,) \,=\, \sum\limits_{i,\, j}\,v^{\,2}_{\,i}\,w^{\,2}_{\,j}\,P_{\,V_{i} \,\otimes\, W_{j}}\,(\,f \,\otimes\, g\,)\]
\[ \,=\, \sum\limits_{i,\, j}\,v^{\,2}_{\,i}\,w^{\,2}_{\,j}\,\left(\,P_{\,V_{i}} \,\otimes\, P_{\,W_{j}}\,\right)\,(\,f \,\otimes\, g\,) \,=\, \sum\limits_{i,\, j}\,v^{\,2}_{\,i}\,w^{\,2}_{\,j}\,\left(\,P_{\,V_{i}}\,(\,f\,) \,\otimes\, P_{\,W_{j}}\,(\,g\,)\,\right)\hspace{3cm}\]
\[ \,=\, \left(\sum\limits_{\,i \,\in\, I} v_{i}^{\,2}P_{\,V_{i}}\,(\,f\,)\right) \otimes \left(\sum\limits_{\,j \,\in\, J} w_{j}^{\,2}P_{\,W_{j}}\,(\,g\,)\right) = S_{V}\,(\,f\,) \,\otimes\, S_{W}\,(\,g\,) = S_{V} \otimes S_{W}\,(\,f \,\otimes\, g\,).\]
This implies that \,$S_{\,V \,\otimes\, W} \,=\, S_{V} \,\otimes\, S_{W}$.\;Since \,$S_{\,V}\, \;\text{and}\; \,S_{\,W}$\, are invertible, by \,$(\,IV\,)$\, of the Theorem (\ref{th1.1}), it follows that \,$S^{\,-\, 1}_{\,V \,\otimes\, W} \,=\, S^{\,-\, 1}_{V} \,\otimes\, S^{\,-\, 1}_{W}$.     
\end{proof}

\begin{theorem}\label{th2}
Let \,$V$\, and \,$W$\, be fusion frames for \,$H$\, and \,$K$\, with frame bounds \,$(\,A,\, B\,)$\, and \,$(\,C,\, D\,)$\, having their associated frame operators \,$S_{V}$\, and \,$S_{W}$, respectively.\;If \,$T_{\,1}$\, and \,$T_{\,2}$\, are invertible and unitary operators on \,$H$\, and \,$K$, then \,$\Delta \,=\, \left\{\,\left(\,T_{\,1} \,\otimes\, T_{\,2}\,\right)\,\left(\,V_{i} \,\otimes\, W_{j}\,\right),\, v_{\,i}\,w_{\,j}\,\right\}_{i,\, j}$\, is a fusion frame for \,$H \,\otimes\, K$.   
\end{theorem}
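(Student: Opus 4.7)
The plan is to reduce the claim to Theorem \ref{thm2} combined with Theorem \ref{th0.001}. Since $V$ and $W$ are already fusion frames, Theorem \ref{thm2} gives that $V \otimes W$ is a fusion frame for $H \otimes K$ with bounds $AC,\,BD$. Thus the remaining task is really to show that the image of a fusion frame in $H \otimes K$ under an invertible unitary operator is again a fusion frame with the same bounds. The natural candidate operator here is $T_1 \otimes T_2$, so I first want to verify its good properties.

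By Theorem \ref{th1.1}(III), (V), we compute
\[(T_1 \otimes T_2)^{\,\ast}\,(T_1 \otimes T_2) \,=\, (T_1^{\,\ast} \otimes T_2^{\,\ast})(T_1 \otimes T_2) \,=\, T_1^{\,\ast}T_1 \otimes T_2^{\,\ast}T_2 \,=\, I_{H} \otimes I_{K} \,=\, I_{H \otimes K},\]
so $T_1 \otimes T_2$ is unitary on $H \otimes K$; by Theorem \ref{th1.1}(IV) it is also invertible with inverse $T_1^{\,-1} \otimes T_2^{\,-1}$. In particular $(T_1 \otimes T_2)(V_i \otimes W_j)$ is a closed subspace, and Theorem \ref{th0.001} (applied in $H \otimes K$) gives the intertwining relation
\[P_{\,(T_1 \otimes T_2)(V_i \otimes W_j)}\,(T_1 \otimes T_2) \,=\, (T_1 \otimes T_2)\,P_{\,V_i \otimes W_j}.\]

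Now for any elementary tensor $f \otimes g \in H \otimes K$, set $h \,=\, T_1^{\,-1}(f) \otimes T_2^{\,-1}(g) \,=\, (T_1 \otimes T_2)^{\,-1}(f \otimes g)$, which is again an elementary tensor. Using the intertwining identity and the fact that $T_1 \otimes T_2$ is an isometry,
\[\left\|\,P_{\,(T_1 \otimes T_2)(V_i \otimes W_j)}(f \otimes g)\,\right\|^{\,2} \,=\, \left\|\,(T_1 \otimes T_2)\,P_{\,V_i \otimes W_j}(h)\,\right\|^{\,2} \,=\, \left\|\,P_{\,V_i \otimes W_j}(h)\,\right\|^{\,2}.\]
Summing over $i,j$ with weights $v_i^{\,2} w_j^{\,2}$ and applying the fusion frame inequality for $V \otimes W$ to $h$, together with $\|h\| = \|f \otimes g\|$ (again by unitarity), yields
\[A\,C\,\|f \otimes g\|^{\,2} \,\leq\, \sum_{i,\,j} v_i^{\,2} w_j^{\,2}\,\left\|\,P_{\,(T_1 \otimes T_2)(V_i \otimes W_j)}(f \otimes g)\,\right\|^{\,2} \,\leq\, B\,D\,\|f \otimes g\|^{\,2},\]
which is exactly the fusion frame condition for $\Delta$ with bounds $AC,\,BD$.

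The only real issue to keep track of is confirming that $(T_1 \otimes T_2)^{\,-1}$ sends an elementary tensor back to an elementary tensor so that the inequality inherited from $V \otimes W$ can be evaluated at an admissible vector; this is handled immediately by Theorem \ref{th1.1}(IV). The rest is bookkeeping through the intertwining identity.
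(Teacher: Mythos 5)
Your proof is correct, and it takes a genuinely different route from the paper's. The paper never invokes Theorem \ref{thm2}: it argues componentwise, factoring $\|P_{(T_1\otimes T_2)(V_i\otimes W_j)}(f\otimes g)\|^2 = \|P_{T_1V_i}(f)\|_1^2\,\|P_{T_2W_j}(g)\|_2^2$ and estimating each factor separately in $H$ and $K$ --- for the lower bound via the inequality $\|P_{V_i}T_1^{\,\ast}(f)\|_1 \leq \|T_1^{\,\ast}\|\,\|P_{T_1V_i}(f)\|_1$ coming from the first half of Theorem \ref{th0.001} (which needs no unitarity) together with $\|f\|_1 \leq \|(T_1^{-1})^{\ast}\|\,\|T_1^{\,\ast}(f)\|_1$, and for the upper bound via the intertwining $P_{T_1V_i}T_1 = T_1P_{V_i}$ --- arriving at the bounds $AC/(\|T_1\otimes T_2\|^2\|(T_1\otimes T_2)^{-1}\|^2)$ and $BD\,\|T_1\otimes T_2\|^2\,\|(T_1\otimes T_2)^{-1}\|^2$. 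You instead lift everything to $H\otimes K$ at once: Theorem \ref{thm2} makes $V\otimes W$ a fusion frame with bounds $AC$, $BD$, you verify $(T_1\otimes T_2)^{\ast}(T_1\otimes T_2) = I_{H\otimes K}$, and a single application of Theorem \ref{th0.001} in the tensor space plus exact norm preservation transfers the inequality. Your route buys the clean bounds $AC$, $BD$ with no norm factors (numerically the same as the paper's here, since under the unitarity hypothesis $\|T_1\otimes T_2\| = \|(T_1\otimes T_2)^{-1}\| = 1$, but your argument exhibits them directly), a single invocation of the intertwining lemma in place of several componentwise estimates, and it dispenses with the identification $(T_1\otimes T_2)(V_i\otimes W_j) = T_1V_i\otimes T_2W_j$ that the paper uses implicitly. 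The paper's componentwise route buys robustness: its lower-bound estimate uses only invertibility of $T_1, T_2$, so it is closer to what one would need if unitarity were weakened, whereas your argument exploits unitarity at every step. You also correctly identified and closed the one real subtlety of your reduction, namely that $(T_1\otimes T_2)^{-1}$ must carry elementary tensors to elementary tensors so that the frame inequality for $V\otimes W$ --- which the paper's definition posits only on elementary tensors --- can be evaluated at $h$; Theorem \ref{th1.1}(IV) together with (II) does exactly that.
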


\begin{proof}
Since \,$T_{\,1}\, \;\text{and}\; \,T_{\,2}$\, are invertible, by \,$(IV)$\, of the Theorem (\ref{th1.1}), \,$T_{\,1} \,\otimes\, T_{\,2}$\, is invertible and \,$\left(\,T_{\,1} \,\otimes\, T_{\,2}\,\right)^{\,-\, 1} \,=\, \left(\,T_{\,1}^{\,-\, 1} \,\otimes\, T_{\,2}^{\,-\, 1}\,\right)$.\;Also, by Theorem (\ref{th0.001}), for any \,$i \,\in\, I\; \;\text{and}\, \,j \,\in\, J$, we get
\begin{equation}\label{eqn2}
\left\|\,P_{\,V_{i}}\,T^{\,\ast}_{\,1}\,(\,f\,)\,\right\|_{1} \,\leq\, \left\|\,T^{\,\ast}_{\,1}\,\right\|\; \left\|\,P_{\,T_{\,1}\,V_{i}}\,(\,f\,)\,\right\|_{1}\; \;\forall\; f \,\in\, H, \;\text{and}
\end{equation}
\begin{equation}\label{eq2}
 \;\left\|\,P_{\,W_{j}}\,T^{\,\ast}_{\,2}\,(\,g\,)\,\right\|_{2} \,\leq\, \left\|\,T^{\,\ast}_{\,2}\,\right\|\; \left\|\,P_{\,T_{\,2}\,W_{j}}\,(\,g\,)\,\right\|_{2}\; \;\forall\; g \,\in\, K.
\end{equation}
Again, since \,$T_{\,1}\, \;\text{and}\; \,T_{\,2}$\, are invertible, for each \,$f \,\in\, H$\, and \,$g \,\in\, K$, we obtain 
\begin{equation}\label{eq2.1}
\|\,f\,\|_{1} \,\leq\, \left\|\,\left(\,T_{\,1}^{\,-\, 1}\,\right)^{\,\ast}\,\right\|\; \left\|\,T^{\,\ast}_{\,1}\,(\,f\,)\,\right\|_{1}\; \;\&\; \; \|\,g\,\|_{2} \,\leq\, \left\|\,\left(\,T_{\,2}^{\,-\, 1}\,\right)^{\,\ast}\,\right\|\; \left\|\,T^{\,\ast}_{\,2}\,(\,g\,)\,\right\|_{2}.
\end{equation}
Now, for each \,$f \,\otimes\, g \,\in\, H \,\otimes\, K$, using Theorem (\ref{th1.1}), we get
\[\sum\limits_{i,\, j}v^{\,2}_{\,i}\,w^{\,2}_{\,j}\left\|\,P_{\,\left(\,T_{\,1} \,\otimes\, T_{\,2}\,\right)\,\left(\,V_{i} \,\otimes\, W_{j}\,\right)}\,(\,f \,\otimes\, g\,)\,\right\|^{\,2} = \sum\limits_{i,\, j}v^{\,2}_{\,i}\,w^{\,2}_{\,j}\left\|\,P_{\,\left(\,T_{\,1}\,V_{i} \,\otimes\, T_{\,2}\,W_{j}\,\right)}\,(\,f \,\otimes\, g\,)\,\right\|^{\,2}\]
\[=\, \sum\limits_{i,\, j}\,v^{\,2}_{\,i}\,w^{\,2}_{\,j}\,\left\|\,\left(\,P_{\,T_{\,1}\,V_{i}} \,\otimes\, P_{\,T_{\,2}\,W_{j}}\,\right)\,(\,f \,\otimes\, g\,)\,\right\|^{\,2}\; \;[\;\text{by note (\ref{note1.1})}\;]\hspace{3cm}\]
\begin{equation}\label{eq2.2}
=\, \left(\,\sum\limits_{\,i \,\in\, I}\, v^{\,2}_{\,i}\,\left\|\,P_{\,T_{\,1}\,V_{i}}\,(\,f\,)\,\right\|_{1}^{\,2}\,\right)\,\left(\,\sum\limits_{\,j \,\in\, J}\, w^{\,2}_{\,j}\,\left\|\,P_{\,T_{\,2}\,W_{j}}\,(\,g\,)\,\right\|_{2}^{\,2}\,\right)\; \;[\;\text{using (\ref{eqn1.01})}\;]\hspace{.5cm}
\end{equation}
\[\geq\, \dfrac{1}{\left\|\,T_{\,1}\,\right\|^{\,2}\,\left\|\,T_{\,2}\,\right\|^{\,2}} \left(\,\sum\limits_{\,i \,\in\, I}\, v^{\,2}_{\,i}\,\left\|\,P_{\,V_{i}}\left(\,T_{\,1}^{\,\ast}\,f\,\right)\,\right\|_{1}^{\,2}\,\right) \left(\,\sum\limits_{\,j \,\in\, J}\, w^{\,2}_{\,j}\,\left\|\,P_{\,W_{j}}\left(\,T^{\,\ast}_{\,2}\,g\,\right)\,\right\|_{2}^{\,2}\,\right)\; \;[\;\text{by (\ref{eqn2}) \& (\ref{eq2})}\;]\]
\[\geq\, \dfrac{A\,C}{\left\|\,T_{\,1}\,\right\|^{\,2}\,\left\|\,T_{\,2}\,\right\|^{\,2}}\,\left\|\,T^{\,\ast}_{\,1}\,(\,f\,)\,\right\|_{1}^{\,2}\;\left\|\,T^{\,\ast}_{\,2}\,(\,g\,)\,\right\|_{2}^{\,2}\; \;[\;\text{since $V, \;W$ are fusion frames}\;]\hspace{1.5cm}\]
\[ \,\geq\, \dfrac{A\,C}{\left\|\,T_{\,1}\,\right\|^{\,2}\,\left\|\,T_{\,2}\,\right\|^{\,2}\,\left\|\,T^{\,-\, 1}_{\,1}\,\right\|^{\,2}\,\left\|\,T^{\,-\, 1}_{\,2}\,\right\|^{\,2}}\,\|\,f\,\|^{\,2}_{1}\,\|\,g\,\|^{\,2}_{2}\; \;[\;\text{by (\ref{eq2.1})}\;]\hspace{5.9cm}\]
\[=\, \dfrac{A\,C}{\left\|\,T_{\,1} \,\otimes\, T_{\,2}\,\right\|^{\,2}\,\left\|\,\left(\,T_{\,1} \,\otimes\, T_{\,2}\,\right)^{\,-\, 1}\,\right\|^{\,2}}\;\left\|\,f \,\otimes\, g\,\right\|^{\,2}.\hspace{5.7cm}\]
On the other hand, since \,$T_{\,1}\, \;\text{and}\; \,T_{\,2}$\, are unitary operators, again by Theorem (\ref{th0.001}), \,$P_{\,T_{\,1}\,V_{i}}\,T_{\,1} \,=\, T_{\,1}\,P_{\,V_{i}}\, \;\text{and}\; \,P_{\,T_{\,2}\,W_{j}}\,T_{\,2} \,=\, T_{\,2}\,P_{\,W_{j}}$.\;Then, for all \,$f \,\otimes\, g \,\in\, H \,\otimes\, K$,  
\[\sum\limits_{i,\, j}\,v^{\,2}_{\,i}\,w^{\,2}_{\,j}\,\left\|\,P_{\,\left(\,T_{\,1} \,\otimes\, T_{\,2}\,\right)\,\left(\,V_{i} \,\otimes\, W_{j}\,\right)}\,(\,f \,\otimes\, g\,)\,\right\|^{\,2}\]
\[=\, \left(\,\sum\limits_{\,i \,\in\, I}\, v^{\,2}_{\,i}\,\left\|\,P_{\,T_{\,1}\,V_{i}}\,(\,f\,)\,\right\|_{1}^{\,2}\,\right)\,\left(\,\sum\limits_{\,j \,\in\, J}\, w^{\,2}_{\,j}\,\left\|\,P_{\,T_{\,2}\,W_{j}}\,(\,g\,)\,\right\|_{2}^{\,2}\,\right)\; \;[\;\text{by (\ref{eq2.2})}\;]\hspace{2.6cm}\]
\[=\, \left(\,\sum\limits_{\,i \,\in\, I}\, v^{\,2}_{\,i}\,\left\|\,T_{\,1}\;P_{\,V_{i}}\left(\,T^{\,-\, 1}_{\,1}\,f\,\right)\,\right\|_{1}^{\,2}\,\right)\,\left(\,\sum\limits_{\,j \,\in\, J}\, w^{\,2}_{\,j}\,\left\|\,T_{\,2}\;P_{\,W_{j}}\left(\,T^{\,-\, 1}_{\,2}\,g\,\right)\,\right\|_{2}^{\,2}\,\right)\hspace{1.9cm}\]
\[\,\leq\, \left\|\,T_{\,1}\,\right\|^{\,2}\,\left\|\,T_{\,2}\,\right\|^{\,2}\,\left(\,\sum\limits_{\,i \,\in\, I}\, v^{\,2}_{\,i}\,\left\|\,P_{\,V_{i}}\left(\,T^{\,-\, 1}_{\,1}\,f\,\right)\,\right\|_{1}^{\,2}\,\right)\,\left(\,\sum\limits_{\,j \,\in\, J}\, w^{\,2}_{\,j}\,\left\|\,P_{\,W_{j}}\left(\,T^{\,-\, 1}_{\,2}\,g\,\right)\,\right\|_{2}^{\,2}\,\right)\hspace{2cm}\]
\[\leq\, B\,D\,\left\|\,T_{\,1}\,\right\|^{\,2}\,\left\|\,T_{\,2}\,\right\|^{\,2}\,\left\|\,T^{\,-\, 1}_{1}\,(\,f\,)\,\right\|^{\,2}_{\,1}\,\left\|\,T^{\,-\, 1}_{\,2}\,(\,g\,)\,\right\|^{\,2}_{2}\; \;[\;\text{since $V,\;W$ are fusion frames}\;]\]
\[ \,\leq\, B\,D\,\left\|\,T_{\,1}\,\right\|^{\,2}\,\left\|\,T_{\,2}\,\right\|^{\,2}\,\left\|\,T^{\,-\, 1}_{\,1}\,\right\|^{\,2}\,\left\|\,T^{\,-\, 1}_{\,2}\,\right\|^{\,2}\,\|\,f\,\|^{\,2}_{\,1}\,\|\,g\,\|^{\,2}_{\,2}\hspace{5cm}\]
\[=\, B\,D\,\left\|\,T_{\,1} \,\otimes\, T_{\,2}\,\right\|^{\,2}\,\left\|\,\left(\,T_{\,1} \,\otimes\, T_{\,2}\,\right)^{\,-\, 1}\,\right\|^{\,2}\,\left\|\,f \,\otimes\, g\,\right\|^{\,2}.\hspace{5cm}\]
Hence, \,$\Delta$\, is a fusion frame for \,$H \,\otimes\, K$.   
\end{proof}

\begin{theorem}
The corresponding fusion frame operator for the fusion frame \,$\Delta$\, is \,$\left(\,T_{\,1} \,\otimes\, T_{\,2}\,\right)\,S_{V \,\otimes\, W}\,\left(\,T_{\,1} \,\otimes\, T_{\,2}\,\right)^{\,-\, 1}$.
\end{theorem}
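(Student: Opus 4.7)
The plan is to compute the fusion frame operator $S_\Delta$ of $\Delta$ directly from its definition and reduce it to $(T_1\otimes T_2)\,S_{V\otimes W}\,(T_1\otimes T_2)^{-1}$ by exploiting the unitarity of $T_1,T_2$ together with the tensor identities already established.

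First, by the definition in the note following Theorem \ref{thm2} (applied now to the fusion frame $\Delta$), for every $f\otimes g\in H\otimes K$,
\[
S_{\Delta}(f\otimes g)\;=\;\sum_{i,j} v_i^{2}w_j^{2}\,P_{(T_{1}\otimes T_{2})(V_i\otimes W_j)}(f\otimes g)\;=\;\sum_{i,j} v_i^{2}w_j^{2}\,P_{T_{1}V_i\,\otimes\,T_{2}W_j}(f\otimes g).
\]
By Note \ref{note1.1}, $P_{T_{1}V_i\otimes T_{2}W_j}=P_{T_{1}V_i}\otimes P_{T_{2}W_j}$, so the task becomes one of rewriting each factor in terms of $P_{V_i}$ and $P_{W_j}$.

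The key step is to invoke the second part of Theorem \ref{th0.001}: since $T_1$ and $T_2$ are unitary, $P_{T_{1}V_i}T_1=T_1P_{V_i}$ and $P_{T_{2}W_j}T_2=T_2P_{W_j}$, hence $P_{T_{1}V_i}=T_1P_{V_i}T_1^{-1}$ and $P_{T_{2}W_j}=T_2P_{W_j}T_2^{-1}$. Substituting these and using parts (III) and (IV) of Theorem \ref{th1.1} to factor tensor products of compositions, we get
\[
P_{T_{1}V_i}\otimes P_{T_{2}W_j}\;=\;(T_1\otimes T_2)\,(P_{V_i}\otimes P_{W_j})\,(T_1^{-1}\otimes T_2^{-1})\;=\;(T_1\otimes T_2)\,P_{V_i\otimes W_j}\,(T_1\otimes T_2)^{-1},
\]
again using Note \ref{note1.1} in the last equality.

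Plugging this back and pulling the constant operators $(T_1\otimes T_2)$ and $(T_1\otimes T_2)^{-1}$ out of the (unconditionally convergent, by Theorem \ref{th2}) sum, we obtain
\[
S_{\Delta}\;=\;(T_1\otimes T_2)\,\Bigl(\sum_{i,j} v_i^{2}w_j^{2}\,P_{V_i\otimes W_j}\Bigr)\,(T_1\otimes T_2)^{-1}\;=\;(T_1\otimes T_2)\,S_{V\otimes W}\,(T_1\otimes T_2)^{-1}
\]
on the dense subspace spanned by simple tensors, and by boundedness on all of $H\otimes K$. The main obstacle, if any, is a conceptual one: one must correctly apply the unitary-projection intertwining of Theorem \ref{th0.001} in both tensor factors simultaneously; once that is in place, the calculation is essentially forced by the multiplicative properties of $\otimes$ in Theorem \ref{th1.1}.
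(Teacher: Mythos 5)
Your proof is correct and takes essentially the same route as the paper: both arguments rest on the unitary intertwining relation \,$P_{\,T_{1}\,V_{i}}\,T_{1} \,=\, T_{1}\,P_{\,V_{i}}$\, (and its analogue for \,$T_{2}$\,) from Theorem \ref{th0.001}, together with Note \ref{note1.1} and parts (III)--(IV) of Theorem \ref{th1.1}. The only difference is bookkeeping --- the paper evaluates on a simple tensor, splits the double sum into \,$T_{1}\,S_{V}\,T_{1}^{\,-1}(f) \,\otimes\, T_{2}\,S_{W}\,T_{2}^{\,-1}(g)$\, and then recombines using \,$S_{V} \,\otimes\, S_{W} \,=\, S_{V \,\otimes\, W}$\,, whereas you conjugate each projection, \,$P_{\,(T_{1} \otimes T_{2})(V_{i} \otimes W_{j})} \,=\, (T_{1} \otimes T_{2})\,P_{\,V_{i} \otimes W_{j}}\,(T_{1} \otimes T_{2})^{\,-1}$\,, term by term before summing, which is the same computation in a slightly different order (and your closing density remark is, if anything, a touch more careful than the paper).
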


\begin{proof}
For each \,$f \,\otimes\, g \,\in\, H \,\otimes\, K$, we have
\[\sum\limits_{i,\, j}\,v^{\,2}_{\,i}\,w^{\,2}_{\,j}\,P_{\,\left(\,T_{\,1} \,\otimes\, T_{\,2}\,\right)\,\left(\,V_{i} \,\otimes\, W_{j}\,\right)}\,(\,f \,\otimes\, g\,) \,=\, \sum\limits_{i,\, j}\,v^{\,2}_{\,i}\,w^{\,2}_{\,j}\,\left(\,P_{\,T_{\,1}\,V_{i}} \,\otimes\, P_{\,T_{\,2}\,W_{j}}\,\right)\,(\,f \,\otimes\, g\,)\]
\[ =\, \left(\,\sum\limits_{\,i \,\in\, I}\, v^{\,2}_{\,i}\,P_{\,T_{\,1}\,V_{i}}\,(\,f\,)\,\right) \,\otimes\, \left(\,\sum\limits_{\,j \,\in\, J}\, w^{\,2}_{\,j}\,P_{\,T_{\,2}\,W_{j}}\,(\,g\,)\,\right)\hspace{5cm}\]
\[\,=\, \left(\,\sum\limits_{\,i \,\in\, I}\, v^{\,2}_{\,i}\;T_{\,1}\,P_{\,V_{i}}\left(\,T^{\,-\, 1}_{\,1}\,f\,\right)\,\right) \,\otimes\, \left(\,\sum\limits_{\,j \,\in\, J}\, w^{\,2}_{\,j}\;T_{\,2}\,P_{\,W_{j}}\left(\,T^{\,-\, 1}_{\,2}\,g\,\right)\,\right)\;[\;\text{by Theorem (\ref{th0.001})}\;]\]
\[ \,=\, T_{\,1}\,S_{V}\,\left(\,T^{\,-\, 1}_{\,1}\,(\,f\,)\,\right) \,\otimes\, T_{\,2}\,S_{W}\,\left(\,T^{\,-\, 1}_{\,2}\,(\,g\,)\,\right)\hspace{7cm}\]
\[ \,=\, \left(\,T_{\,1} \,\otimes\, T_{\,2}\,\right)\,\left(\,S_{V} \,\otimes\, S_{W}\,\right)\,\left(\,T_{\,1}^{\,-\, 1} \,\otimes\, T_{\,2}^{\,-\, 1}\,\right)\,(\,f \,\otimes\, g\,)\;[\;\text{by Theorem (\ref{th1.1})}\;]\hspace{1.5cm}\]
\[\,=\, \left(\,T_{\,1} \,\otimes\, T_{\,2}\,\right)\,S_{V \,\otimes\, W}\,\left(\,T_{\,1} \,\otimes\, T_{\,2}\,\right)^{\,-\, 1}\,(\,f \,\otimes\, g\,).\hspace{6.5cm}\]  
This shows that \,$\left(\,T_{\,1} \,\otimes\, T_{\,2}\,\right)\,S_{V \,\otimes\, W}\,\left(\,T_{\,1} \,\otimes\, T_{\,2}\,\right)^{\,-\, 1}$\, is the corresponding fusion frame operator for \,$\Delta$.
\end{proof}

\begin{definition}
A family of bounded operators \,$\left\{\,T_{i} \,\otimes\, U_{j}\,\right\}_{i,\, j}$\, on  a tensor product of Hilbert space \,$H \,\otimes\, K$\, is called a resolution of the identity operator on \,$H \,\otimes\, K$, if for all \,$f \,\otimes\, g \,\in\, H \,\otimes\, K$, we have 
\[f \,\otimes\, g \,=\, \sum\limits_{i,\, j}\,\left(\,T_{i} \,\otimes\, U_{j}\,\right)\,(\,f \,\otimes\, g\,),\]
provided the series converges unconditionally for all \,$f \,\otimes\, g \,\in\, H \,\otimes\, K$. 
\end{definition}

\begin{proposition}
If the families of bounded operators \,$\left\{\,T_{i}\,\right\}_{\, i \,\in\, I}$\, and \,$\left\{\,U_{j}\,\right\}_{ j \,\in\, J}$\, on \,$H$\, and \,$K$\, are the resolutions of the identity operator on \,$H$\, and \,$K$, then \,$\left\{\,T_{i} \,\otimes\, U_{j}\,\right\}_{i,\, j}$\, is a resolution of the identity operator on \,$H \,\otimes\, K$.
\end{proposition}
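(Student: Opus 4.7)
The plan is to reduce the statement on $H \otimes K$ to the two given reconstructions on $H$ and $K$ by exploiting the fact that, on elementary tensors, the operator $T_i \otimes U_j$ factors as in Theorem \ref{th1.1} (II), namely $(T_i \otimes U_j)(f \otimes g) = T_i(f) \otimes U_j(g)$. The strategy has three steps, carried out in the order below.

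First, I would fix $f \otimes g \in H \otimes K$ and rewrite the summand of the candidate series as $T_i(f) \otimes U_j(g)$ using Theorem \ref{th1.1} (II). Second, I would use the bilinearity of the tensor product together with continuity of the map $(x, y) \mapsto x \otimes y$ from $H \times K$ to $H \otimes K$ (which follows from $\|x \otimes y\| = \|x\|_1 \|y\|_2$ in equation (\ref{eqn1.01})) to exchange the double sum with the tensor product operation:
\[
\sum_{i,j} T_i(f) \otimes U_j(g) \,=\, \left(\sum_{i \in I} T_i(f)\right) \otimes \left(\sum_{j \in J} U_j(g)\right).
\]
Third, since $\{T_i\}_{i \in I}$ and $\{U_j\}_{j \in J}$ are resolutions of the identity on $H$ and $K$ respectively, the inner sums collapse to $f$ and $g$, yielding $f \otimes g$ as required.

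The main obstacle is justifying the second step rigorously, in particular the unconditional convergence of the double sum in $H \otimes K$. For this I would argue as follows: given an arbitrary enumeration (or, more generally, a net of finite partial sums indexed by finite subsets $F \subset I$ and $G \subset J$), write
\[
\sum_{(i,j) \in F \times G} T_i(f) \otimes U_j(g) \,=\, \left(\sum_{i \in F} T_i(f)\right) \otimes \left(\sum_{j \in G} U_j(g)\right),
\]
and estimate the distance to $f \otimes g$ via
\[
\left\|\,a \otimes b \,-\, f \otimes g\,\right\| \,\leq\, \|a - f\|_1 \, \|b\|_2 \,+\, \|f\|_1 \, \|b - g\|_2,
\]
which follows from adding and subtracting $f \otimes b$ and applying (\ref{eqn1.01}). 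Since $\sum_i T_i(f) = f$ and $\sum_j U_j(g) = g$ unconditionally, both error terms on the right can be made arbitrarily small by choosing $F$ and $G$ sufficiently large, establishing unconditional convergence of the double sum to $f \otimes g$. This completes the reduction and finishes the proof.
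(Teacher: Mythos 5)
Your algebraic core is identical to the paper's proof: both rest on Theorem~\ref{th1.1}\,(II) to write \,$\left(\,T_{i} \,\otimes\, U_{j}\,\right)(\,f \,\otimes\, g\,) \,=\, T_{i}(\,f\,) \,\otimes\, U_{j}(\,g\,)$\, and then factor the double sum as \,$\bigl(\,\sum_{i}\,T_{i}(\,f\,)\,\bigr) \,\otimes\, \bigl(\,\sum_{j}\,U_{j}(\,g\,)\,\bigr) \,=\, f \,\otimes\, g$. The paper stops at that formal rearrangement with no convergence justification at all, so your second and third steps already exceed it; and your estimate \,$\|\,a \,\otimes\, b \,-\, f \,\otimes\, g\,\| \,\leq\, \|\,a \,-\, f\,\|_{1}\,\|\,b\,\|_{2} \,+\, \|\,f\,\|_{1}\,\|\,b \,-\, g\,\|_{2}$, via (\ref{eqn1.01}), is exactly the right tool for what it proves: the partial sums over rectangles \,$F \,\times\, G$\, converge to \,$f \,\otimes\, g$.

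The caveat is your final claim that this establishes \emph{unconditional} convergence, which is what the paper's definition demands. Unconditional summability over \,$I \,\times\, J$\, means convergence of the net of partial sums over \emph{arbitrary} finite subsets \,$S \,\subset\, I \,\times\, J$, and your argument controls only the product sets \,$F \,\times\, G$. Rectangles are cofinal in that net, but convergence along a cofinal subfamily does not force convergence of the whole net: for a ``staircase'' set \,$S$\, disjoint from a large rectangle, collapsing rows gives \,$\sum_{(i,j) \in S}\,T_{i}(\,f\,) \,\otimes\, U_{j}(\,g\,) \,=\, \sum_{i}\,T_{i}(\,f\,) \,\otimes\, c_{i}$\, with the \,$c_{i}$\, merely bounded partial sums of \,$\{\,U_{j}(\,g\,)\,\}$\, pointing in unrelated directions, and the elementary bounds no longer close (the obvious estimates pick up a factor growing with the number of rows). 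The full unconditional statement is in fact true, but it needs genuinely heavier input: for instance, show the double family is weakly unconditionally Cauchy, i.e.\ \,$\sum_{i,j}\,\left|\,\left<\,T_{i}(\,f\,) \,\otimes\, U_{j}(\,g\,) \,,\, \Phi\,\right>\,\right| \,<\, \infty$\, for every \,$\Phi \,\in\, H \,\otimes\, K$\, --- this uses the fact that every Hilbert--Schmidt operator between Hilbert spaces is absolutely summing --- and then invoke the Bessaga--Pe\l czy\'nski theorem (a Hilbert space contains no copy of \,$c_{0}$, so wuC families are unconditionally summable), after which your rectangle computation identifies the sum as \,$f \,\otimes\, g$. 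So: your proof matches the paper's route and is more careful than the paper itself, but as written the last sentence overstates what the rectangle estimate delivers; either prove the staircase case by the route above, or weaken the conclusion to convergence in the rectangular sense.
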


\begin{proof}
Since \,$\left\{\,T_{i}\,\right\}_{\, i \,\in\, I}\, \;\text{and}\; \left\{\,U_{j}\,\right\}_{ j \,\in\, J}$\, are the resolutions of the identity operator on \,$H\, \;\text{and}\; \,K$, respectively, we have
\[f \,=\, \sum\limits_{\,i \,\in\, I}\,T_{i}\,(\,f\,)\;\; \;\forall\; f \,\in\, H\; \;\text{and}\;\; \;g \,=\, \sum\limits_{\,j \,\in\, J}\,U_{j}\,(\,g\,)\;\; \;\forall\; g \,\in\, K.\]
Then, for all \,$f \,\otimes\, g \,\in\, H \,\otimes\, K$, we have
\[f \,\otimes\, g \,=\, \left(\,\sum\limits_{\,i \,\in\, I}\,T_{i}\,(\,f\,)\,\right) \,\otimes\, \left(\,\sum\limits_{\,j \,\in\, J}\,U_{j}\,(\,g\,)\,\right) \,=\, \sum\limits_{i,\, j}\,\left(\,T_{i} \,\otimes\, U_{j}\,\right)\,(\,f \,\otimes\, g\,).\]This completes the proof.  
\end{proof}

\begin{note}
Let \,$V$\, and \,$W$\, be fusion frames for \,$H$\, and \,$K$\, with their associated frame operators \,$S_{V}$\, and \,$S_{W}$, respectively.\,By reconstruction formula we can write
\[f \,=\, \sum\limits_{\,i \,\in\, I}\,v^{\,2}_{\,i}\,S^{\,-\, 1}_{V}\,P_{\,V_{\,i}}\,(\,f\,)\;\; \;\forall\; f \,\in\, H\; \;\text{and}\;\;g \,=\, \sum\limits_{\,j \,\in\, J}\,w^{\,2}_{\,j}\,S^{\,-\, 1}_{W}\,P_{\,W_{\,j}}\,(\,g\,)\; \;\forall\, g \,\in\, K.\]
Then it is easy to verify that 
\[f \,\otimes\, g \,=\, \sum\limits_{i,\, j}\,v^{\,2}_{\,i}\,w^{\,2}_{\,j}\,S^{\,-\, 1}_{V \,\otimes\, W}\,P_{V_{i} \,\otimes\, W_{j}}\,(\,f \,\otimes\, g\,)\; \;\forall\; f \,\otimes\, g \,\in\, H \,\otimes\, K.\]
This shows that the family of operators \,$\left\{\,v^{\,2}_{\,i}\,w^{\,2}_{\,j}\,S^{\,-\, 1}_{V \,\otimes\, W}\,P_{V_{i} \,\otimes\, W_{j}}\,\right\}_{i,\,j}$\, is resolution of the identity operator on \,$H \,\otimes\, K$.
\end{note}

\begin{theorem}
Let \,$V$\, and \,$W$\, be fusion frames for \,$H$\, and \,$K$\, with frame bounds \,$(\,A,\, B\,)$\, and \,$(\,C,\, D\,)$\, having their associated frame operators \,$S_{V}$\, and \,$S_{W}$, respectively.\;Then \,$\left\{\,v^{\,2}_{\,i}\,w^{\,2}_{\,j}\,\left(\,T_{i} \,\otimes\, U_{j}\,\right)\,\right\}_{i,\, j}$\, is a resolution of the identity operator on \,$H \,\otimes\, K$, where \,$T_{i} \,\otimes\, U_{j} \,=\, P_{\,V_{i} \,\otimes\, W_{j}}\,S^{\,-\, 1}_{\,V \,\otimes\, \,W}$\, for \,$i \,\in\, I$\, and \,$j \,\in\, J$.\;Furthermore, 
\[\dfrac{A\,C}{B^{\,2}\,D^{\,2}}\,a^{\,2}\,b^{\,2}\left\|\,f \otimes g\,\right\|^{\,2} \,\leq \sum\limits_{i,\, j}v^{\,2}_{\,i}\,w^{\,2}_{\,j}\left\|\,\left(\,T_{i} \,\otimes\, U_{j}\,\right)(\,f \otimes g\,)\,\right\|^{\,2} \,\leq \dfrac{B\,D}{A^{\,2}\,C^{\,2}}\,a^{\,2}\,b^{\,2}\left\|\,f \otimes g\,\right\|^{\,2},\]
for all \,$f \,\otimes\, g \,\in\, H \,\otimes\, K$, where \,$a$\, and \,$b$\, are constants with \,$a\,b \,=\, 1$.   
\end{theorem}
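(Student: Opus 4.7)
The plan is to reduce everything to a tensor-product factorization of the operators $T_i \otimes U_j$ and then apply the one-factor fusion frame inequalities coordinate-wise. Concretely, Note \ref{note1.1} gives $P_{V_i \otimes W_j} = P_{V_i} \otimes P_{W_j}$, the previous theorem gives $S_{V \otimes W}^{-1} = S_V^{-1} \otimes S_W^{-1}$, and part $(III)$ of Theorem \ref{th1.1} lets us compose these as
\[
T_i \otimes U_j \;=\; P_{V_i \otimes W_j}\, S_{V \otimes W}^{-1} \;=\; \bigl(P_{V_i} S_V^{-1}\bigr) \otimes \bigl(P_{W_j} S_W^{-1}\bigr).
\]
So I may take $T_i = P_{V_i} S_V^{-1}$ and $U_j = P_{W_j} S_W^{-1}$ and work with these.

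For the resolution of identity I would start from the single-space reconstructions, obtained by applying $S_V^{-1}$ on the right of $S_V = \sum_i v_i^2 P_{V_i}$:
\[
f \;=\; \sum_{i \in I} v_i^2\, P_{V_i}\, S_V^{-1}(f), \qquad g \;=\; \sum_{j \in J} w_j^2\, P_{W_j}\, S_W^{-1}(g),
\]
with both series converging unconditionally. Forming the tensor product of these two sums and distributing (using part $(II)$ of Theorem \ref{th1.1}) gives
\[
f \otimes g \;=\; \sum_{i,j} v_i^2 w_j^2\, \bigl(P_{V_i} S_V^{-1}(f)\bigr) \otimes \bigl(P_{W_j} S_W^{-1}(g)\bigr) \;=\; \sum_{i,j} v_i^2 w_j^2\, (T_i \otimes U_j)(f \otimes g),
\]
which is exactly the resolution of identity statement.

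For the inequality I would use the multiplicativity of the tensor norm, $\|(T_i \otimes U_j)(f \otimes g)\|^2 = \|T_i(f)\|_1^2\, \|U_j(g)\|_2^2$, so the double sum factors as a product of two single sums. The fusion frame inequality for $V$ applied to $S_V^{-1} f$ gives
\[
A\, \|S_V^{-1} f\|_1^2 \;\leq\; \sum_{i \in I} v_i^2\, \|P_{V_i}(S_V^{-1} f)\|_1^2 \;\leq\; B\, \|S_V^{-1} f\|_1^2,
\]
and the operator-norm bounds $\|S_V^{-1}\| \leq 1/A$ together with $\|f\|_1 \leq \|S_V\|\, \|S_V^{-1} f\|_1 \leq B\, \|S_V^{-1} f\|_1$ turn this into $(A/B^2)\|f\|_1^2 \leq \sum_i v_i^2 \|T_i f\|_1^2 \leq (B/A^2)\|f\|_1^2$. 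The analogous estimate in $K$ yields $(C/D^2)\|g\|_2^2 \leq \sum_j w_j^2 \|U_j g\|_2^2 \leq (D/C^2)\|g\|_2^2$. Multiplying the two, and using $\|f \otimes g\|^2 = \|f\|_1^2 \|g\|_2^2$, produces the stated bounds.

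The only conceptual wrinkle is the appearance of the factor $a^2 b^2$ with $ab = 1$: by part $(VI)$ of Theorem \ref{th1.1} the representation of an elementary tensor is unique only up to $(f,g) \mapsto (af, bg)$ with $ab = 1$, so the one-factor norm estimates individually carry scalars $a^2$ and $b^2$, whose product $a^2 b^2 = 1$ gives an invariant bound. I expect no serious technical obstacle; the main step to get right is simply the bookkeeping in the chain $\|f\|_1 \leq B\|S_V^{-1} f\|_1$ and $\|S_V^{-1} f\|_1 \leq (1/A)\|f\|_1$ so that the exponents on $A,B,C,D$ come out as $A/B^2$, $B/A^2$, etc., matching the claimed constants.
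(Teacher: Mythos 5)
Your proposal is correct and follows essentially the same route as the paper: both factor \,$T_{i} \,\otimes\, U_{j} \,=\, P_{\,V_{i}}\,S^{\,-\,1}_{V} \,\otimes\, P_{\,W_{j}}\,S^{\,-\,1}_{W}$, obtain the resolution of identity by tensoring the two single-space reconstruction formulas, and derive the bounds by applying the fusion frame inequalities to \,$S^{\,-\,1}_{V}\,f$\, and \,$S^{\,-\,1}_{W}\,g$\, together with \,$B^{\,-\,1}\,\|\,f\,\|_{1} \,\leq\, \|\,S^{\,-\,1}_{V}\,f\,\|_{1} \,\leq\, A^{\,-\,1}\,\|\,f\,\|_{1}$\, (and its analogue in \,$K$\,), invoking part (VI) of Theorem \ref{th1.1} for the scalars \,$a,\,b$\, with \,$a\,b \,=\, 1$\, exactly as the paper does.
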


\begin{proof}
Since \,$S_{V}$\, and \,$S_{W}$\, are frame operator for  \,$V$\, and \,$W$, for all \,$f \,\in\, H,\, g \,\in\, K$,
\[f \,=\, \sum\limits_{\,i \,\in\, I}\, v^{\,2}_{\,i}\,P_{\,V_{i}}\,\left(\,S^{\,-\, 1}_{V}\,f\,\right)\; \;\text{and}\;\; \;g \,=\, \sum\limits_{\,j \,\in\, J}\, w^{\,2}_{\,j}\,P_{\,W_{j}}\,\left(\,S^{\,-\, 1}_{W}\,g\,\right).\]Now, for all \,$f \,\otimes\, g \,\in\, H \,\otimes\, K$, we have
\[f \,\otimes\, g \,=\, \left(\,\sum\limits_{\,i \,\in\, I}\, v^{\,2}_{\,i}\,P_{\,V_{i}}\,\left(\,S^{\,-\, 1}_{V}\,f\,\right)\,\right) \,\otimes\, \left(\,\sum\limits_{\,j \,\in\, J}\, w^{\,2}_{\,j}\,P_{\,W_{j}}\,\left(\,S^{\,-\, 1}_{W}\,g\,\right)\,\right)\]
\[=\, \sum\limits_{i,\, j}\,v^{\,2}_{\,i}\,w^{\,2}_{\,j}\,\left(\,P_{\,V_{i}}\,S^{\,-\, 1}_{V}\,(\,f\,) \,\otimes\, P_{\,W_{j}}\,S^{\,-\, 1}_{W}\,(\,g\,)\,\right)\hspace{1cm}\]
\[\hspace{.3cm}=\, \sum\limits_{i,\, j}\,v^{\,2}_{\,i}\,w^{\,2}_{\,j}\,\left(\,P_{\,V_{i}} \,\otimes\, P_{\,W_{j}}\,\right)\,\left(\,S^{\,-\, 1}_{V} \,\otimes\, S^{\,-\, 1}_{W}\,\right)\,\left(\,f \,\otimes\, g\,\right)\]
\[=\, \sum\limits_{i,\, j}\,v^{\,2}_{\,i}\,w^{\,2}_{\,j}\;P_{\,V_{i} \,\otimes\, W_{j}}\,S^{\,-\, 1}_{\,V \,\otimes\, \,W}\,\left(\,f \,\otimes\, g\,\right).\hspace{2cm}\]
This shows that \,$\left\{\,v^{\,2}_{\,i}\,w^{\,2}_{\,j}\,\left(\,T_{i} \,\otimes\, U_{j}\,\right)\,\right\}_{i,\, j}$\, is a resolution of the identity operator on \,$H \,\otimes\, K$, where 
\,$T_{i} \,\otimes\, U_{j} \,=\, P_{\,V_{i} \,\otimes\, W_{j}}\,S^{\,-\, 1}_{\,V \,\otimes\, \,W} \,=\, P_{\,V_{i}}\,S^{\,-\, 1}_{V} \,\otimes\, P_{\,W_{j}}\,S^{\,-\, 1}_{W}$.\;Now, by \,$(\,VI\,)$\, of the Theorem (\ref{th1.1}), there exist constants \,$a$\, and \,$b$\, with \,$a\,b \,=\, 1$\, such that
\[ T_{\,i}\,(\,f\,) \,=\, a\;P_{\,V_{i}}\,S^{\,-\, 1}_{V}\,(\,f\,)\;\, \forall\; f \,\in\, H,\;\; \;\text{and}\;\; \;U_{\,j}\,(\,g\,) \,=\, b\;P_{\,W_{j}}\,S^{\,-\, 1}_{W}\,(\,g\,)\; \;\forall\; g \,\in\, K.\]
Then, for all \,$f \,\otimes\, g \,\in\, H \,\otimes\, K$, we have
\[\sum\limits_{i,\, j}\,v^{\,2}_{\,i}\,w^{\,2}_{\,j}\,\left\|\,\left(\,T_{i} \,\otimes\, U_{j}\,\right)\,(\,f \,\otimes\, g\,)\,\right\|^{\,2} \,=\, \sum\limits_{i,\, j}\,v^{\,2}_{\,i}\,w^{\,2}_{\,j}\,\left\|\,T_{i}\,(\,f\,) \,\otimes\, U_{j}\,(\,g\,)\,\right\|^{\,2}\]
\[=\, \sum\limits_{i,\, j}\,v^{\,2}_{\,i}\,w^{\,2}_{\,j} \left\|\,T_{i}\,(\,f\,)\,\right\|_{1}^{\,2} \left\|\,U_{j}\,(\,g\,)\,\right\|_{2}^{\,2} \,=\, \left(\,\sum\limits_{\,i \,\in\, I}\,v^{\,2}_{\,i} \left\|\,T_{i}\,(\,f\,)\,\right\|_{1}^{\,2}\,\right)\,\left(\,\sum\limits_{\,j \,\in\, J}\,w^{\,2}_{\,j}\,\left\|\,U_{j}\,(\,g\,)\,\right\|_{2}^{\,2}\,\right)\]
\begin{equation}\label{eq3.1}
=\, \left(\,\sum\limits_{\,i \,\in\, I}\,v^{\,2}_{\,i}\,\left\|\,a\;P_{\,V_{i}}\,S^{\,-\, 1}_{V}\,(\,f\,)\,\right\|_{1}^{\,2}\,\,\right)\, \left(\,\sum\limits_{\,j \,\in\, J}\,w^{\,2}_{\,j}\,\left\|\,b\;P_{\,W_{j}}\,S^{\,-\, 1}_{W}\,(\,g\,)\,\right\|_{2}^{\,2}\,\,\right)\hspace{.8cm}
\end{equation}  
\[\leq\, B\;D\;a^{\,2}\;b^{\,2}\,\left\|\,S^{\,-\, 1}_{V}\,(\,f\,)\,\right\|_{1}^{\,2}\;\left\|\,S^{\,-\, 1}_{W}\,(\,g\,)\,\right\|_{2}^{\,2}\; \;[\;\text{since $V,\;W$ are fusion frames}\;]\hspace{.2cm}\]
\[\leq\, \dfrac{B\,D}{A^{\,2}\,C^{\,2}}\;\;a^{\,2}\;b^{\,2}\;\|\,f\,\|_{\,1}^{\,2}\;\|\,g\,\|_{\,2}^{\,2} \,=\, \dfrac{B\,D}{A^{\,2}\,C^{\,2}}\;\;a^{\,2}\;b^{\,2}\; \|\,f \,\otimes\, g\,\|^{\,2}.\hspace{3cm}\]
\[\left[\;\text{since $B^{\,-1}\,I_{H} \,\leq\, S^{\,-1}_{V} \,\leq\, A^{\,-1}\,I_{H}$\, and \,$D^{\,-1}\,I_{K} \,\leq\, S^{\,-1}_{W} \,\leq\, C^{\,-1}\,I_{K}$}\,\right].\]
On the other hand, using (\ref{eq3.1})
\[\sum\limits_{i,\, j}\,v^{\,2}_{\,i}\,w^{\,2}_{\,j}\,\left\|\,\left(\,T_{i} \,\otimes\, U_{j}\,\right)\,(\,f \,\otimes\, g\,)\,\right\|^{\,2} \,\geq\, A\;C\;a^{\,2}\;b^{\,2}\,\left\|\,S^{\,-\, 1}_{V}\,(\,f\,)\,\right\|_{\,1}^{\,2}\;\left\|\,S^{\,-\, 1}_{W}\,(\,g\,)\,\right\|_{\,2}^{\,2}\]
\[\hspace{3cm}\geq\, \dfrac{A\,C}{B^{\,2}\,D^{\,2}}\;a^{\,2}\;b^{\,2}\;\|\,f\,\|_{\,1}^{\,2}\;\|\,g\,\|_{\,2}^{\,2} \,=\, \dfrac{A\,C}{B^{\,2}\,D^{\,2}}\;\;a^{\,2}\;b^{\,2}\,\left\|\,f \,\otimes\, g\,\right\|^{\,2}.\]
This completes the proof.       
\end{proof}

\section{Alternative dual fusion frame in tensor product of Hilbert spaces}

\smallskip\hspace{.6 cm}In this section, an alternative dual of a fusion frame in \,$H \,\otimes\, K$\, is discussed.\\

\begin{theorem}
Let \,$V$\, and \,$W$\, be fusion frames for \,$H$\, and \,$K$\, with frame bounds \,$A,\, B$\, and \,$C,\, D$\, having their corresponding fusion frame operators \,$S_{V}$\, and \,$S_{W}$, respectively.\;Then the family \,$\Lambda \,=\, \, \left\{\,S^{\,-\, 1}_{V \,\otimes\, W}\,\left(\,V_{i} \,\otimes\, W_{j}\,\right),\, v_{\,i}\,w_{\,j}\,\right\}_{i,\, j}$\, is a fusion frame for \,$H \,\otimes\, K$.
\end{theorem}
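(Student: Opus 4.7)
The plan is to recognise the family $\Lambda$ as the canonical dual fusion frame of $V \otimes W$ and then to invoke Theorem \ref{thm1.1} applied to this single fusion frame in $H \otimes K$.

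First, by Theorem \ref{thm2}, the hypothesis that $V$ and $W$ are fusion frames for $H$ and $K$ (with bounds $(A,B)$ and $(C,D)$) guarantees that $V \otimes W$ is a fusion frame for $H \otimes K$ with bounds $AC$ and $BD$. By the theorem on frame operators established earlier in Section~3, the corresponding frame operator satisfies $S_{V \otimes W} = S_V \otimes S_W$, and so $S^{-1}_{V \otimes W} = S^{-1}_V \otimes S^{-1}_W$ by part~(IV) of Theorem \ref{th1.1}. Consequently the subspace $S^{-1}_{V \otimes W}(V_i \otimes W_j)$ coincides with $(S^{-1}_V V_i) \otimes (S^{-1}_W W_j)$, a closed subspace of $H \otimes K$, and by Note \ref{note1.1} the associated orthogonal projection factorises as $P_{S^{-1}_V V_i} \otimes P_{S^{-1}_W W_j}$. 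Thus $\Lambda$ is, by definition, the canonical dual fusion frame of $V \otimes W$.

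Second, applying Theorem \ref{thm1.1} directly to the fusion frame $V \otimes W$ immediately delivers the conclusion: $\Lambda$ is a fusion frame for $H \otimes K$ with bounds
$$\dfrac{AC}{\|S_{V \otimes W}\|^{2}\,\|S^{-1}_{V \otimes W}\|^{2}} \quad \text{and} \quad BD \,\|S_{V \otimes W}\|^{2}\,\|S^{-1}_{V \otimes W}\|^{2}.$$
By part~(I) of Theorem \ref{th1.1}, one has $\|S_{V \otimes W}\| = \|S_V\|\,\|S_W\|$ and $\|S^{-1}_{V \otimes W}\| = \|S^{-1}_V\|\,\|S^{-1}_W\|$, so the bounds can be rewritten purely in terms of the norms of $S_V,\,S_W$ and their inverses.

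There is no genuinely hard step; the one place to be slightly careful is the identification $S^{-1}_{V \otimes W}(V_i \otimes W_j) = (S^{-1}_V V_i) \otimes (S^{-1}_W W_j)$ together with the projection factorisation, which rests on Note \ref{note1.1} and the fact that invertible bounded operators carry closed subspaces to closed subspaces. As a purely self-contained alternative, one may bypass Theorem \ref{thm1.1} in $H \otimes K$ and instead expand
$$\sum_{i,j} v_i^{\,2}\,w_j^{\,2}\,\bigl\|P_{S^{-1}_{V \otimes W}(V_i \otimes W_j)}(f \otimes g)\bigr\|^{2} = \Bigl(\sum_{i} v_i^{\,2}\,\|P_{S^{-1}_V V_i}(f)\|_{1}^{\,2}\Bigr)\Bigl(\sum_{j} w_j^{\,2}\,\|P_{S^{-1}_W W_j}(g)\|_{2}^{\,2}\Bigr),$$
apply Theorem \ref{thm1.1} to the canonical duals of $V$ in $H$ and of $W$ in $K$ separately, and multiply the resulting two-sided estimates to recover the same bounds.
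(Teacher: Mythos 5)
Your proposal is correct, and your primary route genuinely differs from the paper's. The paper never invokes the canonical-dual machinery in \,$H \,\otimes\, K$\, itself: it applies Theorem \ref{thm1.1} twice, once to \,$V$\, in \,$H$\, and once to \,$W$\, in \,$K$, obtaining the two componentwise inequalities (\ref{eq4}) and (\ref{eq4.1}), multiplies them, and then converts the product into a statement about \,$P_{\,S^{\,-\,1}_{V \,\otimes\, W}\,(\,V_{i} \,\otimes\, W_{j}\,)}$\, using \,$\left\|\,f \,\otimes\, g\,\right\| \,=\, \|\,f\,\|_{1}\,\|\,g\,\|_{2}$, \,$P_{\,V_{i} \,\otimes\, W_{j}} \,=\, P_{\,V_{i}} \,\otimes\, P_{\,W_{j}}$\, (Note \ref{note1.1}), and \,$S^{\,-\,1}_{V \,\otimes\, W} \,=\, S^{\,-\,1}_{V} \,\otimes\, S^{\,-\,1}_{W}$\, --- i.e., exactly the ``self-contained alternative'' you sketch at the end, so your fallback coincides with the paper's argument, bounds included. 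Your first route (identify \,$\Lambda$\, as the canonical dual of \,$V \,\otimes\, W$\, and apply Theorem \ref{thm1.1} once in \,$H \,\otimes\, K$) is cleaner and gives the same constants via \,$\left\|\,S_{V \,\otimes\, W}\,\right\| \,=\, \|\,S_{V}\,\|\,\|\,S_{W}\,\|$\, from part (I) of Theorem \ref{th1.1}; your care over the subspace identification \,$S^{\,-\,1}_{V \,\otimes\, W}\,(\,V_{i} \,\otimes\, W_{j}\,) \,=\, (\,S^{\,-\,1}_{V}\,V_{i}\,) \,\otimes\, (\,S^{\,-\,1}_{W}\,W_{j}\,)$\, is also warranted and handled correctly. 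The one caution is definitional: the paper's notion of a fusion frame for \,$H \,\otimes\, K$\, quantifies only over simple tensors \,$f \,\otimes\, g$, whereas Theorem \ref{thm1.1} concerns genuine fusion frames, whose defining inequality holds for \emph{every} vector of the Hilbert space; so applying it to \,$V \,\otimes\, W$\, tacitly uses the frame inequality on all of \,$H \,\otimes\, K$, which Theorem \ref{thm2} (as stated) does not supply. The needed extension is true and easy --- from \,$A\,I_{H} \,\leq\, S_{V} \,\leq\, B\,I_{H}$\, and \,$C\,I_{K} \,\leq\, S_{W} \,\leq\, D\,I_{K}$\, one gets \,$A\,C\,I \,\leq\, S_{V} \,\otimes\, S_{W} \,\leq\, B\,D\,I$\, on the whole space, writing for instance \,$S_{V} \,\otimes\, S_{W} \,-\, A\,C\,I \,=\, (\,S_{V} \,-\, A\,I_{H}\,) \,\otimes\, S_{W} \,+\, A\,I_{H} \,\otimes\, (\,S_{W} \,-\, C\,I_{K}\,) \,\geq\, 0$\, --- but it is a step the paper sidesteps entirely by working with simple tensors throughout, which is precisely what its componentwise multiplication (and your alternative) buys: the theorem is delivered in exactly the form the paper states it, with no appeal to the full-space frame property.
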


\begin{proof}
By Theorem (\ref{thm1.1}), for all \,$f \,\in\, H$\, and \,$g \,\in\, K$, we have
\begin{equation}\label{eq4}
\dfrac{A\,\|\,f\,\|_{1}^{\,2}}{\left\|\,S_{V}\,\right\|^{\,2}\,\left\|\,S^{\,-\, 1}_{V}\,\right\|^{\,2}} \,\leq\, \sum\limits_{\,i \,\in\, I}\,v^{\,2}_{\,i}\,\left\|\,P_{\,S^{\,-\, 1}_{V}\,V_{\,i}}\,(\,f\,)\,\right\|_{1}^{\,2} \,\leq\, B\,\left\|\,S_{V}\,\right\|^{\,2}\,\left\|\,S^{\,-\, 1}_{V}\,\right\|^{\,2}\,\|\,f\,\|_{1}^{\,2},
\end{equation}

\begin{equation}\label{eq4.1}
\dfrac{C\,\|\,g\,\|_{2}^{\,2}}{\left\|\,S_{W}\,\right\|^{\,2}\,\left\|\,S^{\,-\, 1}_{W}\,\right\|^{\,2}}\leq\sum\limits_{\,j \,\in\, J}\,w^{\,2}_{\,j}\,\left\|\,P_{\,S^{\,-\, 1}_{W}\,W_{\,j}}\,(\,g\,)\,\right\|_{2}^{\,2}\leq D\,\left\|\,S_{W}\,\right\|^{\,2}\,\left\|\,S^{\,-\, 1}_{W}\,\right\|^{\,2}\,\|\,g\,\|_{2}^{\,2}
\end{equation}
Multiplying the inequalities (\ref{eq4}) and (\ref{eq4.1}) and using (\ref{eqn1.01}), we get
\[\dfrac{A\,C\,\|\,f \,\otimes\, g\,\|^{\,2}}{\left\|\,S_{V}\,\right\|^{\,2}\,\left\|\,S_{W}\,\right\|^{\,2}\,\left\|\,S^{\,-\, 1}_{V}\,\right\|^{\,2}\,\left\|\,S^{\,-\, 1}_{W}\,\right\|^{\,2}} \,\leq\, \sum\limits_{\,i,\,j}\,v^{\,2}_{\,i}\,w^{\,2}_{\,j}\,\left\|\,P_{\,S^{\,-\, 1}_{V}\,V_{\,i}}\,(\,f\,) \,\otimes\, P_{\,S^{\,-\, 1}_{W}\,W_{\,j}}\,(\,g\,)\,\right\|^{\,2}\]
\[\hspace{4.5cm} \,\leq\, B\,D\,\left\|\,S_{V}\,\right\|^{\,2}\,\left\|\,S_{W}\,\right\|^{\,2}\,\left\|\,S^{\,-\, 1}_{V}\,\right\|^{\,2}\,\left\|\,S^{\,-\, 1}_{W}\,\right\|^{\,2}\,\|\,f \,\otimes\, g\,\|^{\,2}.\]
Therefore, for each \,$f \,\otimes\, g \,\in\, H \,\otimes\, K$, we get
\[\Rightarrow\, \dfrac{A\,C\,\|\,f \,\otimes\, g\,\|^{\,2}}{\left\|\,S_{V} \,\otimes\, S_{W}\,\right\|^{\,2}\,\left\|\,S^{\,-\, 1}_{V} \,\otimes\, S^{\,-\, 1}_{W}\,\right\|^{\,2}} \,\leq\, \sum\limits_{\,i,\,j}\,v^{\,2}_{\,i}\,w^{\,2}_{\,j}\,\left\|\,P_{\,S^{\,-\, 1}_{V}\,V_{\,i} \,\otimes\, S^{\,-\, 1}_{W}\,W_{\,j}}\,(\,f \,\otimes\, g\,)\,\right\|^{\,2}\]
\[\hspace{4.5cm} \,\leq\, B\,D\,\left\|\,S_{V} \,\otimes\, S_{W}\,\right\|^{\,2}\,\left\|\,S^{\,-\, 1}_{V} \,\otimes\, S^{\,-\, 1}_{W}\,\right\|^{\,2}\,\|\,f \,\otimes\, g\,\|^{\,2}\] 
\[\Rightarrow\, \dfrac{A\,C\,\|\,f \,\otimes\, g\,\|^{\,2}}{\left\|\,S_{V \,\otimes\, W}\,\right\|^{\,2}\,\left\|\,S^{\,-\, 1}_{V \,\otimes\, W}\,\right\|^{\,2}} \,\leq\, \sum\limits_{\,i,\,j}\,v^{\,2}_{\,i}\,w^{\,2}_{\,j}\,\left\|\,P_{\,S^{\,-\, 1}_{V \,\otimes\, W}\,\left(\,V_{\,i} \,\otimes\, W_{\,j}\,\right)}\,(\,f \,\otimes\, g\,)\,\right\|^{\,2}\hspace{1.5cm}\]
\[\hspace{3.2cm} \,\leq\, B\,D\,\left\|\,S_{V \,\otimes\, W}\,\right\|^{\,2}\,\left\|\,S^{\,-\, 1}_{V \,\otimes\, W}\,\right\|^{\,2}\,\|\,f \,\otimes\, g\,\|^{\,2}.\]
This shows that \,$\Lambda$\, is a fusion frame for \,$H \,\otimes\, K$\, with bounds \,$\dfrac{A\,C}{\left\|\,S_{V \,\otimes\, W}\,\right\|^{\,2}\,\left\|\,S^{\,-\, 1}_{V \,\otimes\, W}\,\right\|^{\,2}}$\, and \,$B\,D\,\left\|\,S_{V \,\otimes\, W}\,\right\|^{\,2}\,\left\|\,S^{\,-\, 1}_{V \,\otimes\, W}\,\right\|^{\,2}$.\\ 
\end{proof}

\begin{definition}
Let \,$V \,\otimes\, W$\, be a fusion frame for \,$H \,\otimes\, K$\, and \,$S_{V \,\otimes\, W}$\, be the corresponding  fusion frame operator.\;Then a fusion Bessel sequence \,$V^{\,\prime} \,\otimes\, W^{\,\prime}$\, in \,$H \,\otimes\, K$\, is said to be an alternative dual of \,$V \,\otimes\, W$\, if for all \,$f \,\otimes\, g \,\in\, H \,\otimes\, K$,
\[f \,\otimes\, g \,=\, \sum\limits_{i,\, j}\,v_{\,i}\,w_{\,j}\,v^{\,\prime}_{\,i}\,w^{\,\prime}_{\,j}\;P_{\,V^{\,\prime}_{i} \,\otimes\, W^{\,\prime}_{j}}\;S^{\,-\, 1}_{V \,\otimes\, W}\;P_{\,V_{i} \,\otimes\, W_{j}}\,(\,f \,\otimes\, g\,).\]  
\end{definition}

\begin{note}
According to note (\ref{note1}), a reconstruction formula on \,$K$\, is also described by 
\[g \,=\, \sum\limits_{\,j \,\in\, J}\,w^{\,2}_{\,j}\,P_{\,S^{\,-\, 1}_{W}\,W_{\,j}}\,S^{\,-\, 1}_{W}\,P_{\,W_{\,j}}\,(\,g\,)\;\; \;\forall\; g \,\in\, K.\]
Therefore, for all \,$f \,\otimes\, g \,\in\, H \,\otimes\, K$, we have
\[f \,\otimes\, g \,=\, \left(\,\sum\limits_{\,i \,\in\, I}\,v^{\,2}_{\,i}\,P_{\,S^{\,-\, 1}_{V}\,V_{\,i}}\,S^{\,-\, 1}_{V}\,P_{\,V_{\,i}}\,(\,f\,)\,\right) \,\otimes\, \left(\,\sum\limits_{\,j \,\in\, J}\,w^{\,2}_{\,j}\,P_{\,S^{\,-\, 1}_{W}\,W_{\,j}}\,S^{\,-\, 1}_{W}\,P_{\,W_{\,j}}\,(\,g\,)\,\right)\]
\[\hspace{1cm}=\, \sum\limits_{i,\, j}\,v^{\,2}_{\,i}\,w^{\,2}_{\,j}\,\left(\,P_{\,S^{\,-\, 1}_{V}\,V_{\,i}} \,\otimes\, P_{\,S^{\,-\, 1}_{W}\,W_{\,j}}\,\right)\,\left(\,S^{\,-\, 1}_{V} \,\otimes\, S^{\,-\, 1}_{W}\,\right)\,\left(\,P_{V_{i}} \,\otimes\, P_{W_{j}}\,\right)\,(\,f \,\otimes\, g\,)\]
\[=\, \sum\limits_{i,\, j}\,v^{\,2}_{\,i}\,w^{\,2}_{\,j}\,P_{\,S^{\,-\, 1}_{V}\,V_{\,i} \,\otimes\, S^{\,-\, 1}_{W}\,W_{j}}\,S^{\,-\, 1}_{V \,\otimes\, W}\,P_{V_{i} \,\otimes\, W_{j}}\,(\,f \,\otimes\, g\,)\hspace{2.8cm}\]
\[=\, \sum\limits_{i,\, j}\,v^{\,2}_{\,i}\,w^{\,2}_{\,j}\,P_{\,S^{\,-\, 1}_{V \,\otimes\, W}\,\left(\,V_{\,i} \,\otimes\, W_{j}\,\right)}\,S^{\,-\, 1}_{V \,\otimes\, W}\,P_{V_{i} \,\otimes\, W_{j}}\,(\,f \,\otimes\, g\,).\hspace{2.8cm}\]
Thus, we see that the canonical dual frame \,$\left\{\,S^{\,-\, 1}_{V \,\otimes\, W}\,\left(\,V_{i} \,\otimes\, W_{j}\,\right),\, v_{\,i}\,w_{\,j}\,\right\}_{i,\, j}$\, is an alternative dual fusion frame for \,$H \,\otimes\, K$.\\
\end{note}
   
\begin{theorem}\label{th3}
Let \,$V$\, and \,$W$\, be fusion frames for \,$H$\, and \,$K$\, with their alternative dual \,$V^{\,\prime} \,=\, \left\{\,\left(\,V^{\,\prime}_{i},\, v^{\,\prime}_{\,i}\,\right)\,\right\}_{\, i \,\in\, I}$\, and \,$W^{\,\prime} \,=\, \left\{\,\left(\,W^{\,\prime}_{j},\, w^{\,\prime}_{\,j}\,\right)\,\right\}_{ j \,\in\, J}$\,, respectively.\;Then \,$V^{\,\prime} \,\otimes\, W^{\,\prime}$\, is an alternative dual of the fusion frame \,$V \,\otimes\, W$\, for \,$H \,\otimes\, K$. 
\end{theorem}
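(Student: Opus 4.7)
The plan is to verify the two requirements in the definition of an alternative dual, namely: (i) that $V' \otimes W'$ is a fusion Bessel sequence in $H \otimes K$, and (ii) that the reconstruction identity
\[f \otimes g \,=\, \sum_{i,\,j}\,v_i\,w_j\,v'_i\,w'_j\,P_{V'_i \otimes W'_j}\,S^{-1}_{V \otimes W}\,P_{V_i \otimes W_j}(f \otimes g)\]
holds for every elementary tensor $f \otimes g \in H \otimes K$. The entire argument will be driven by tensoring the two reconstruction formulas that hold in $H$ and $K$ separately, combined with the multiplicative properties of tensor products (Theorem \ref{th1.1}), the identification $P_{V_i \otimes W_j} = P_{V_i} \otimes P_{W_j}$ from Note \ref{note1.1}, and the already-proven identity $S^{-1}_{V \otimes W} = S^{-1}_V \otimes S^{-1}_W$.

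First, I would settle the Bessel condition. Since $V'$ and $W'$ are alternative duals of $V$ and $W$, by definition they are fusion Bessel sequences in $H$ and $K$ with some bounds $B'$ and $D'$, respectively. Following the upper-bound direction of the computation used in Theorem \ref{thm2}, multiplying the two Bessel inequalities and applying (\ref{eqn1.01}) together with Note \ref{note1.1} gives
\[\sum_{i,\,j}\,(v'_i w'_j)^{2}\,\bigl\|\,P_{V'_i \otimes W'_j}(f \otimes g)\,\bigr\|^{2} \,\leq\, B'\,D'\,\|\,f \otimes g\,\|^{2},\]
so $V' \otimes W'$ is a fusion Bessel sequence in $H \otimes K$.

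Next, I would establish the reconstruction identity. Using the alternative-dual reconstruction formulas for $V, W$ in the respective spaces, for $f \in H$ and $g \in K$ we have
\[f \,=\, \sum_{i \in I}\,v_i\,v'_i\,P_{V'_i}\,S^{-1}_V\,P_{V_i}(f), \qquad g \,=\, \sum_{j \in J}\,w_j\,w'_j\,P_{W'_j}\,S^{-1}_W\,P_{W_j}(g).\]
Tensoring these two expansions, distributing $\otimes$ over the two unconditionally convergent sums, and using parts (II)--(III) of Theorem \ref{th1.1} yields
\[f \otimes g \,=\, \sum_{i,\,j}\,v_i w_j v'_i w'_j\,\bigl(P_{V'_i} \otimes P_{W'_j}\bigr)\bigl(S^{-1}_V \otimes S^{-1}_W\bigr)\bigl(P_{V_i} \otimes P_{W_j}\bigr)(f \otimes g).\]
Finally, I would collapse the three tensor-product operators into their fused counterparts via Note \ref{note1.1} and the previously proved relation $S^{-1}_V \otimes S^{-1}_W = S^{-1}_{V \otimes W}$, producing exactly the required reconstruction formula.

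There is no genuinely hard step here: the whole argument is a bookkeeping exercise once the tensor identities are in place. The only point requiring a small amount of care is justifying that the double sum obtained after tensoring the two single sums can indeed be reordered into a sum over pairs $(i,j)$, which is legitimate because both original series converge unconditionally (being reconstruction series of fusion frames), so Fubini-type reordering in the Hilbert space $H \otimes K$ is valid and produces the claimed identity on elementary tensors, which extends by linearity and density to all of $H \otimes K$.
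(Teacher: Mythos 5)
Your proposal is correct and takes essentially the same route as the paper's own proof: the paper likewise obtains the Bessel property of \,$V^{\,\prime} \,\otimes\, W^{\,\prime}$\, from Theorem \ref{thm2}, tensors the two alternative-dual reconstruction formulas in \,$H$\, and \,$K$, and collapses \,$\left(\,P_{\,V^{\,\prime}_{i}} \,\otimes\, P_{\,W^{\,\prime}_{j}}\,\right)\left(\,S^{\,-\,1}_{V} \,\otimes\, S^{\,-\,1}_{W}\,\right)\left(\,P_{\,V_{i}} \,\otimes\, P_{\,W_{j}}\,\right)$\, into \,$P_{\,V^{\,\prime}_{i} \,\otimes\, W^{\,\prime}_{j}}\,S^{\,-\,1}_{V \,\otimes\, W}\,P_{\,V_{i} \,\otimes\, W_{j}}$\, via Theorem \ref{th1.1} and Note \ref{note1.1}. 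Your added remark justifying the reordering of the double series by unconditional convergence is a refinement the paper leaves implicit, but it does not change the argument.
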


\begin{proof}
By Theorem (\ref{thm2}), \,$V \,\otimes\, W$\, is a fusion frame for \,$H \,\otimes\, K$\, and \,$V^{\,\prime} \,\otimes\, W^{\,\prime}$\, is a fusion Bessel sequence in \,$H \,\otimes\, K$.\;Since \,$V^{\,\prime}$\, and \,$W^{\,\prime}$\, are alternative dual sequences of \,$V$\, and \,$W$, for all \,$f \,\in\, H\, \,\text{and}\, \,g \,\in\, K$, 
\[f \,=\, \sum\limits_{\,i \,\in\, I}\,v_{\,i}\,v^{\,\prime}_{\,i}\,P_{\,V^{\,\prime}_{i}}\;S^{\,-\, 1}_{V}\;P_{\,V_{i}}\,(\,f\,)\; \;\text{and}\; \;g \,=\, \sum\limits_{\,j \,\in\, J}\,w_{\,j}\,w^{\,\prime}_{\,j}\,P_{\,W^{\,\prime}_{j}}\;S^{\,-\, 1}_{\,W}\;P_{\,W_{j}}\,(\,g\,).\]Then, for all \,$f \,\otimes\, g \,\in\, H \,\otimes\, K$, using the Theorem (\ref{th1.1}), we get
\[f \,\otimes\, g \,=\, \left(\,\sum\limits_{\,i \,\in\, I}\,v_{\,i}\,v^{\,\prime}_{\,i}\,P_{\,V^{\,\prime}_{i}}\;S^{\,-\, 1}_{V}\;P_{\,V_{i}}\,(\,f\,)\,\right) \,\otimes\, \left(\,\sum\limits_{\,j \,\in\, J}\,w_{\,j}\,w^{\,\prime}_{\,j}\,P_{\,W^{\,\prime}_{j}}\;S^{\,-\, 1}_{W}\;P_{\,W_{j}}\,(\,g\,)\,\right)\]
\[=\, \sum\limits_{i,\, j}\,v_{\,i}\,w_{\,j}\,v^{\,\prime}_{\,i}\,w^{\,\prime}_{\,j}\,\left(\,P_{\,V^{\,\prime}_{i}} \,\otimes\, P_{\,W^{\,\prime}_{j}}\,\right)\,\left(\,S^{\,-\, 1}_{V} \,\otimes\, S^{\,-\, 1}_{W}\,\right)\,\left(\,P_{\,V_{i}} \,\otimes\, P_{\,W_{j}}\,\right)\,(\,f \,\otimes\, g\,)\]
\[\,=\, \sum\limits_{i,\, j}\,v_{\,i}\,w_{\,j}\,v^{\,\prime}_{\,i}\,w^{\,\prime}_{\,j}\;P_{\,V^{\,\prime}_{i} \,\otimes\, W^{\,\prime}_{j}}\;S^{\,-\, 1}_{V \,\otimes\, W}\;P_{\,V_{i} \,\otimes\, W_{j}}\,(\,f \,\otimes\, g\,).\hspace{3.7cm}\]
This completes the proof\,. 
\end{proof}

\begin{theorem}
Let \,$V$\, and \,$W$\, be fusion frames for \,$H$\, and \,$K$\, with bounds \,$(\,B_{\,1},\,D_{\,1}\,)$\, and \,$(\,B_{\,2},\,D_{\,2}\,)$\, having their alternative duals \,$V^{\,\prime} \,=\, \left\{\,\left(\,V^{\,\prime}_{i},\, v^{\,\prime}_{\,i}\,\right)\,\right\}_{\, i \,\in\, I}$\, and \,$W^{\,\prime} \,=\, \left\{\,\left(\,W^{\,\prime}_{j},\, w^{\,\prime}_{\,j}\,\right)\,\right\}_{ j \,\in\, J}$\,, respectively.\;Then \,$V^{\,\prime} \,\otimes\, W^{\,\prime}$\, is a fusion frame for \,$H \,\otimes\, K$.
\end{theorem}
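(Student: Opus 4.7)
The plan is to reduce the statement to Theorem~\ref{thm2}: once $V'$ is shown to be a fusion frame for $H$ and $W'$ a fusion frame for $K$, the tensor product $V' \otimes W'$ is automatically a fusion frame for $H \otimes K$. The definition of alternative dual already assumes $V'$ and $W'$ are fusion Bessel sequences, so the upper frame bounds in $H$ and $K$ come for free; the whole task is to read off positive lower bounds from the reconstruction identities $f = \sum_{i} v_i v'_i P_{V'_i} S_V^{-1} P_{V_i}(f)$ and its analogue for $W'$.

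For the lower bound on $V'$, I would pair the reconstruction identity with $f$ and use self-adjointness of the orthogonal projections to rewrite
\[
\|f\|_1^{\,2} = \sum_{i \in I} v_i\,v'_i \bigl\langle S_V^{\,-1} P_{V_i}(f),\, P_{V'_i}(f)\bigr\rangle_1 .
\]
Applying Cauchy--Schwarz first inside each summand and then to the pair of $\ell^{\,2}$-sequences $\{v_i\|S_V^{\,-1}P_{V_i}(f)\|_1\}$ and $\{v'_i\|P_{V'_i}(f)\|_1\}$ gives
\[
\|f\|_1^{\,4} \leq \Bigl(\sum_{i \in I} v_i^{\,2}\,\|S_V^{\,-1} P_{V_i}(f)\|_1^{\,2}\Bigr)\Bigl(\sum_{i \in I} (v'_i)^{\,2}\,\|P_{V'_i}(f)\|_1^{\,2}\Bigr).
\]
The first factor is bounded above by $\|S_V^{\,-1}\|^{\,2} \cdot D_1 \|f\|_1^{\,2} \leq (D_1/B_1^{\,2})\|f\|_1^{\,2}$, using the upper fusion-frame bound $D_1$ of $V$ together with the operator-norm estimate $\|S_V^{\,-1}\| \leq 1/B_1$ that follows from $B_1 I_H \leq S_V$. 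Dividing through by $\|f\|_1^{\,2}$ then produces the lower frame bound $B_1^{\,2}/D_1$ for $V'$.

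The identical computation, with $W,\,S_W,\,B_2,\,D_2$ in place of $V,\,S_V,\,B_1,\,D_1$, yields the lower bound $B_2^{\,2}/D_2$ for $W'$. Hence both $V'$ and $W'$ are fusion frames for $H$ and $K$ respectively, and a direct appeal to Theorem~\ref{thm2} establishes that $V' \otimes W'$ is a fusion frame for $H \otimes K$. The only step of genuine substance is the double Cauchy--Schwarz manipulation that converts the alternative-dual reconstruction formula into a lower frame inequality; the rest is a routine invocation of results already in the paper.
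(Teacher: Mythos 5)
Your proposal is correct, and every estimate in it checks out: pairing the reconstruction identity with $f$, using self-adjointness of $P_{\,V^{\,\prime}_{i}}$, applying Cauchy--Schwarz termwise and then to the $\ell^{\,2}$-sequences, and bounding the first factor by $\left\|\,S^{\,-\,1}_{V}\,\right\|^{\,2} D_{\,1}\,\|\,f\,\|_{1}^{\,2} \,\leq\, (\,D_{\,1}/B_{\,1}^{\,2}\,)\,\|\,f\,\|_{1}^{\,2}$ all work, yielding the lower bound $B_{\,1}^{\,2}/D_{\,1}$ for $V^{\,\prime}$ (and $B_{\,2}^{\,2}/D_{\,2}$ for $W^{\,\prime}$). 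However, your architecture differs from the paper's. The paper never shows that $V^{\,\prime}$ and $W^{\,\prime}$ are individually fusion frames: it first invokes Theorem \ref{th3} to establish that $V^{\,\prime} \,\otimes\, W^{\,\prime}$ is an alternative dual of $V \,\otimes\, W$, then runs the very same pair-and-double-Cauchy--Schwarz computation directly in $H \,\otimes\, K$, starting from $\left\|\,f \,\otimes\, g\,\right\|^{\,2} \,=\, \left<\,\sum_{i,\,j} v_{\,i}\,w_{\,j}\,v^{\,\prime}_{\,i}\,w^{\,\prime}_{\,j}\,P_{\,V^{\,\prime}_{i} \,\otimes\, W^{\,\prime}_{j}}\,S^{\,-\,1}_{V \,\otimes\, W}\,P_{\,V_{i} \,\otimes\, W_{j}}\,(\,f \,\otimes\, g\,)\,,\, f \,\otimes\, g\,\right>$, and arrives at the lower bound $1/\left(\,D_{\,1}\,D_{\,2}\,\left\|\,S^{\,-\,1}_{V \,\otimes\, W}\,\right\|^{\,2}\,\right)$, taking the upper (Bessel) bound from Theorem \ref{thm2} exactly as you do. So the core analytic trick is identical, but you factor first and tensor last, while the paper tensors first and estimates in the product space. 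Your route buys independence from Theorem \ref{th3} and the genuinely stronger intermediate statement that each alternative dual is itself a fusion frame (essentially Gavruta's one-space result, which the paper nowhere records); the paper's route buys a formally sharper constant, since $\left\|\,S^{\,-\,1}_{V \,\otimes\, W}\,\right\| \,\leq\, 1/(\,B_{\,1}\,B_{\,2}\,)$ gives $1/\left(\,D_{\,1}\,D_{\,2}\,\left\|\,S^{\,-\,1}_{V \,\otimes\, W}\,\right\|^{\,2}\,\right) \,\geq\, B_{\,1}^{\,2}\,B_{\,2}^{\,2}/(\,D_{\,1}\,D_{\,2}\,)$, the bound your argument produces after multiplying through Theorem \ref{thm2}. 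Note also that both arguments, consistently with this paper's conventions, only verify the frame inequality on simple tensors $f \,\otimes\, g$, so neither is more rigorous than the other on that point.
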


\begin{proof}
Since \,$V^{\,\prime}$\, and \,$W^{\,\prime}$\, are fusion Bessel sequences in \,$H$\, and \,$K$, respectively, by Theorem (\ref{thm2}), \,$V^{\,\prime} \,\otimes\, W^{\,\prime}$\, is a fusion Bessel sequence in \,$H \,\otimes\, K$.\;Also, since \,$V^{\,\prime}$\, and \,$W^{\,\prime}$\, are alternative dual sequences of \,$V$\, and \,$W$, respectively, by Theorem (\ref{th3}), \,$V^{\,\prime} \,\otimes\, W^{\,\prime}$\, is an alternative dual of the fusion frame \,$V \,\otimes\, W$\, for \,$H \,\otimes\, K$.  Now, for each \,$f \,\otimes\, g \,\in\, H \,\otimes\, K$, we have \;$\left\|\,f \,\otimes\, g\,\right\|^{\,2} \,=\, \left<\,f \,\otimes\, g \,,\, f \,\otimes\, g\,\right>$
\[ \,=\, \left<\,\sum\limits_{i,\, j}\,v_{\,i}\,w_{\,j}\,v^{\,\prime}_{\,i}\,w^{\,\prime}_{\,j}\;P_{\,V^{\,\prime}_{i} \,\otimes\, W^{\,\prime}_{j}}\;S^{\,-\, 1}_{V \,\otimes\, W}\;P_{\,V_{i} \,\otimes\, W_{j}}\,(\,f \,\otimes\, g\,) \;,\; f \,\otimes\, g\,\right>\hspace{3.5cm}\]
\[=\, \sum\limits_{i,\, j}\,v_{\,i}\,w_{\,j}\,v^{\,\prime}_{\,i}\,w^{\,\prime}_{\,j}\,\left<\,S^{\,-\, 1}_{V \,\otimes\, W}\;P_{\,V_{i} \,\otimes\, W_{j}}\,(\,f \,\otimes\, g\,) \;,\; P_{\,V^{\,\prime}_{i} \,\otimes\, W^{\,\prime}_{j}}\,(\,f \,\otimes\, g\,)\,\right>\hspace{3.2cm}\]
\[=\, \sum\limits_{i,\, j}\,v_{\,i}\,w_{\,j}\,v^{\,\prime}_{\,i}\,w^{\,\prime}_{\,j}\,\left<\,S^{\,-\, 1}_{V}\;P_{\,V_{i}}\,(\,f\,) \,\otimes\, S^{\,-\, 1}_{W}\;P_{\,W_{j}}\,(\,g\,) \;,\; P_{\,V^{\,\prime}_{i}}\,(\,f\,) \,\otimes\, P_{\,W^{\,\prime}_{j}}\,(\,g\,)\,\right>\hspace{1.3cm}\]
\[=\, \left(\,\sum\limits_{\,i \,\in\, I}\,v_{\,i}\,v^{\,\prime}_{\,i}\,\left<\,S^{\,-\, 1}_{V}\;P_{\,V_{i}}\,(\,f\,) \,,\, P_{\,V^{\,\prime}_{i}}\,(\,f\,)\,\right>_{1}\,\right)\,\left(\,\sum\limits_{\,j \,\in\, J}\,w_{\,j}\,w^{\,\prime}_{\,j}\,\left<\,S^{\,-\, 1}_{W}\;P_{\,W_{j}}\,(\,g\,) \,,\, P_{\,W^{\,\prime}_{j}}\,(\,g\,)\,\right>_{2}\,\right)\]
\[\leq\, \left(\,\sum\limits_{\,i \,\in\, I}\,v_{i}\,v^{\,\prime}_{i}\,\left\|\,S^{\,-\, 1}_{V}\;P_{\,V_{i}}\,(\,f\,)\,\right\|_{1}\,\left\|\,P_{\,V^{\,\prime}_{i}}\,(\,f\,)\,\right\|_{1}\,\right)\,\left(\,\sum\limits_{\,j \,\in\, J}\,w_{j}\,w^{\,\prime}_{j}\,\left\|\,S^{\,-\, 1}_{W}\;P_{\,W_{j}}\,(\,g\,)\,\right\|_{2}\,\left\|\,P_{\,W^{\,\prime}_{j}}\,(\,g\,)\,\right\|_{2}\,\right)\]
\[\leq\, \left(\,\sum\limits_{\,i \,\in\, I}\,v^{\,2}_{i}\,\left\|\,S^{\,-\, 1}_{V}\;P_{\,V_{i}}\,(\,f\,)\,\right\|_{1}^{\,2}\,\right)^{\,\dfrac{1}{2}}\,\left(\,\sum\limits_{\,i \,\in\, I}\,(\,v^{\,\prime}_{i}\,)^{\,2}\,\left\|\,P_{\,V^{\,\prime}_{i}}\,(\,f\,)\,\right\|_{1}^{\,2}\,\right)^{\,\dfrac{1}{2}}\left(\,\sum\limits_{\,j \,\in\, J}\,w^{\,2}_{j}\,\left\|\,S^{\,-\, 1}_{W}\;P_{\,W_{j}}\,(\,g\,)\,\right\|_{2}^{\,2}\,\right)^{\,\dfrac{1}{2}}\,\times\hspace{2.7cm}\]
\[\left(\,\sum\limits_{\,j \,\in\, J}\,(\,w^{\,\prime}_{\,j}\,)^{\,2}\,\left\|\,P_{\,W^{\,\prime}_{j}}\,(\,g\,)\,\right\|_{\,2}^{\,2}\,\right)^{\,\dfrac{1}{2}}\; \;[\;\text{by C-S inequality}\;]\]
\[\leq\, \sqrt{D_{1}\,D_{2}}\,\left\|\,S^{\,-\, 1}_{V}\,\right\|\,\left\|\,S^{\,-\, 1}_{W}\,\right\|\,\|\,f\,\|_{1}\,\|\,g\,\|_{2}\,\left(\,\sum\limits_{\,i \,\in\, I}\,(\,v^{\,\prime}_{i}\,)^{\,2}\,\left\|\,P_{\,V^{\,\prime}_{i}}\,(\,f\,)\,\right\|_{1}^{\,2}\,\right)^{\,\dfrac{1}{2}}\,\left(\,\sum\limits_{\,j \,\in\, J}\,(\,w^{\,\prime}_{j}\,)^{\,2}\,\left\|\,P_{\,W^{\,\prime}_{j}}\,(\,g\,)\,\right\|_{2}^{\,2}\,\right)^{\,\dfrac{1}{2}}\]
\[\;[\;\text{since $V,\;W$ are fusion frames}\;]\]
\[=\, \sqrt{D_{\,1}\,D_{\,2}}\;\left\|\,S^{\,-\, 1}_{V \,\otimes\, W}\,\right\|\,\left\|\,f \,\otimes\, g\,\right\|\,\left(\,\sum\limits_{i,\, j}\,(\,v^{\,\prime}_{\,i}\,)^{\,2}\,(\,w^{\,\prime}_{\,j}\,)^{\,2}\,\left\|\,P_{\,V^{\,\prime}_{i} \,\otimes\, W^{\,\prime}_{j}}\,(\,f \,\otimes\, g\,)\,\right\|^{\,2}\,\right)^{\,\dfrac{1}{2}}\hspace{.5cm}\]
\[\Rightarrow\, \dfrac{1}{D_{\,1}\,D_{\,2}\,\left\|\,S^{\,-\, 1}_{V \,\otimes\, W}\,\right\|^{\,2}}\,\left\|\,f \,\otimes\, g\,\right\|^{\,2} \,\leq\, \sum\limits_{i,\, j}\,(\,v^{\,\prime}_{\,i}\,)^{\,2}\,(\,w^{\,\prime}_{\,j}\,)^{\,2}\,\left\|\,P_{\,V^{\,\prime}_{i} \,\otimes\, W^{\,\prime}_{j}}\,(\,f \,\otimes\, g\,)\,\right\|^{\,2}.\hspace{1cm}\]
This completes the proof. 
\end{proof}

\end{document}